	\definecolor{lapis}{HTML}{26619C}
\numberwithin{equation}{section}
\theoremstyle{plain}
	\newtheorem{thm}{Theorem}
\theoremstyle{plain}
	\newtheorem{conj}[thm]{Conjecture}
	\newtheorem{cor}[thm]{Corollary}
	\newtheorem{lem}[thm]{Lemma}
	\newtheorem{prop}[thm]{Proposition}
\theoremstyle{definition}
\theoremstyle{remark}
	\newtheorem{rem}[thm]{Remark}
	\Crefname{thm}{Theorem}{Theorems}
	\Crefname{lem}{Lemma}{Lemmas}
\newcommand{\bb}{\mathbf}
\newcommand{\cal}{\mathcal}
\newcommand{\ur}{\mathrm}
\renewcommand{\sf}{\mathsf}
\newcommand{\define}[3]{\expandafter#1\csname#3\endcsname{#2{#3}}}
\newcommand{\pa}[1]{\left(#1\right)}
\newcommand{\mat}[1]{\begin{matrix}#1\end{matrix}}
\newcommand{\pmat}[1]{\pa{\mat{#1}}}
\DeclareMathOperator{\Alt}{\text{$\Lambda$}}
\newcommand{\Br}{\mathit{Br}}
\newcommand{\QL}{\bar{\mathbf{Q}}_\ell}
\g@addto@macro \normalsize {%
 \setlength\abovedisplayskip{10pt plus 2pt minus 2pt}%
 \setlength\belowdisplayskip{10pt plus 2pt minus 2pt}%
}
\begin{document}

\title{Unipotent Elements and Twisting in Link Homology}
\author{Minh-T\^{a}m Quang Trinh}
\address{Massachusetts Institute of Technology, 77 Massachusetts Avenue, Cambridge, MA 02139}
\email{mqt@mit.edu}

\maketitle

\begin{abstract}
Let $\mathcal{U}$ be the unipotent variety of a complex reductive group $G$.
Fix opposed Borel subgroups $B_\pm \subseteq G$ with unipotent radicals $U_\pm$.
The map that sends $x_+x_- \mapsto x_+x_-x_+^{-1}$ for all $x_\pm \in U_\pm$ restricts to a map from $U_+U_- \cap gB_+$ into $\mathcal{U} \cap gB_+$, for any $g$.
We conjecture that the restricted map forms half of a homotopy equivalence between these varieties, and thus, induces a weight-preserving isomorphism between their compactly-supported cohomologies.
Noting that the map is equivariant with respect to certain actions of $B_+ \cap gB_+g^{-1}$, we prove for type $A$ that an equivariant analogue of this isomorphism exists.
Curiously, this follows from a certain duality in Khovanov--Rozansky homology, a tool from knot theory.
\end{abstract}

\thispagestyle{empty}



\section{Introduction}

\subsection{}

Let $G$ be a complex, connected, reductive algebraic group.
Let $B_+$ and $B_-$ be opposed Borel subgroups of $G$, and let $U_\pm$ be the unipotent radical of $B_\pm$.
For instance, if $G$ is the group of invertible $n \times n$ matrices $\GL_n$, then we can choose $U_+$, \emph{resp.}\ $U_-$, to be the subgroup of upper-triangular, \emph{resp.}\ lower-triangular, matrices with $1$'s along the diagonal.
Every element of $U_+U_-$ can be written uniquely in the form $x_+x_-$, where $x_\pm \in U_\pm$.

Let $\cal{U} \subseteq G$ be the closed subvariety of unipotent elements.
With the notation above, there is a map $\Phi : U_+U_- \to \cal{U}$ defined by
\begin{align}
\Phi(x_+x_-) = x_+x_-x_+^{-1}.
\end{align}
Note that $U_+U_-$ is an affine space, whereas $\cal{U}$ is usually singular.
Nonetheless, in the analytic topology, the sets $U_+U_-(\bb{C})$ and $\cal{U}(\bb{C})$ are homotopy equivalent, as they are both contractible.

For any $g \in G(\bb{C})$, the map $\Phi$ restricts to a map $\Phi_g : \cal{V}_g \to \cal{U}_g$, where
\begin{align}
\cal{U}_g &= \cal{U} \cap gB_+,\\
\cal{V}_g &= U_+U_- \cap gB_+.
\end{align}
Note that $\cal{U}_g, \cal{V}_g, \Phi_g$ only depend on the coset $gB_+$.
We endow $\cal{U}_g(\bb{C})$ and $\cal{V}_g(\bb{C})$ with the analytic topology.
The new idea proposed in this note is:

\begin{conj}\label{conj:main}
The map $\Phi_g$ defines half of a homotopy equivalence between $\cal{U}_g(\bb{C})$ and $\cal{V}_g(\bb{C})$.
\end{conj}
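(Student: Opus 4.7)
My plan is to exhibit $\Phi_g$ as part of a homotopy equivalence by combining a fiber analysis with an equivariant contraction argument. The first step is to examine the fibers of $\Phi_g$. Given $u \in \cal{U}_g$, the equation $x_+x_-x_+^{-1} = u$ with $x_- \in U_-$ forces $x_- = x_+^{-1} u x_+$; the constraint $x_+x_- \in gB_+$ then reduces to $u x_+ \in gB_+$, which is automatic since $u \in gB_+$ and $x_+ \in B_+$. Hence
\begin{align}
\Phi_g^{-1}(u) = \{x_+ \in U_+ : x_+^{-1} u x_+ \in U_-\},
\end{align}
the set of $U_+$-elements conjugating $u$ into $U_-$. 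A direct calculation in $\SL_2$ with $g$ a lift of the nontrivial Weyl element shows this fiber is a single point and $\Phi_g$ is there an isomorphism, consistent with the conjecture. If, for every $u \in \cal{U}_g$, the fiber is contractible and $\Phi_g$ is locally trivial in the analytic topology, the long exact sequence of homotopy groups concludes the argument.

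The second, complementary step would be to construct a homotopy inverse $\Psi_g : \cal{U}_g \to \cal{V}_g$ directly. Structurally, $\Phi$ rewrites an element of the open cell $U_+U_-$ as a $U_+$-conjugate of an element of $U_-$, so inverting it amounts to selecting, for each $u \in \cal{U}_g$, a canonical $U_+$-element conjugating $u$ into $U_-$---essentially a section of the fiber fibration above. One can then attempt to homotope $\Psi_g \circ \Phi_g$ to the identity on $\cal{V}_g$ using the exponential parametrization $x_+ = \exp(X_+)$ of $U_+$ and scaling $X_+$ by $s \in [0,1]$. A delicate point is that naive interpolations of the form $s \mapsto \exp(sX_+) x_- \exp(-sX_+)$ do not preserve the subspace $gB_+$, so the homotopy must be built intrinsically inside $\cal{V}_g$ (or $\cal{U}_g$) rather than inside ambient $G$.

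The third step would exploit the equivariance of $\Phi_g$ under the natural conjugation action of $H_g := B_+ \cap gB_+g^{-1}$. A cocharacter $\lambda : \bb{G}_m \to H_g$, if chosen suitably, yields compatible $\bb{G}_m$-actions on $\cal{V}_g$ and $\cal{U}_g$; if attracting, a standard Bia{\l}ynicki--Birula argument supplies a common deformation retraction onto the fixed loci, where $\Phi_g$ should be substantially simpler. A careful choice of $\lambda$, guided by the position of $gB_+$ in the flag variety, would be needed for this to succeed.

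The main obstacle is the singular and stratified nature of $\cal{U}_g$, together with the absence of a canonical globally defined $U_+$-element in the fiber analysis above: building a section $\Psi_g$ without case-splitting on Bruhat strata is the crux, and on $\cal{U}_g$ standard smooth homotopy techniques are not directly available. This is presumably why the present paper bypasses direct geometry in the type $A$ case, encoding both sides through braid-group data and invoking a duality in Khovanov--Rozansky homology to obtain the equivariant cohomological shadow of the conjectured equivalence.
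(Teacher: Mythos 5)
The statement you are addressing is a conjecture; the paper does not prove it in general, but only verifies it by direct computation in low rank (\Cref{sec:low-rank}) and proves an equivariant cohomological analogue for type $A$ (\Cref{thm:cohomology}) using the Gorsky--Hogancamp--Mellit--Nakagane Serre duality for Khovanov--Rozansky homology. Your proposal, as you yourself acknowledge, does not close the argument either, and the specific gap in your first step is already visible in the paper's $\SL_3$ example for $g = \dot{w}_0$. The fiber computation $\Phi_g^{-1}(u) = \{x_+ \in U_+ : x_+^{-1} u x_+ \in U_-\}$ is correct, but $\Phi_g$ is not locally trivial in the analytic topology and is not even surjective: after reducing to $\Phi_g^\dagger$ as in the paper's proof, the fiber over a generic point of $\cal{U}_g^\dagger$ is a single reduced point, the fiber over $(-1,0,0)$ is the nodal curve $\{cc'=0\}$, and every other point of $\cal{U}_g^\dagger$ lying over $u=-1$ has empty preimage. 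So the long exact sequence of a fibration is simply unavailable, and contractible fibers alone do not yield a homotopy equivalence for a non-proper algebraic map.

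Your second and third steps remain programmatic; the paper itself remarks at the end of \Cref{sec:kr} that it has not found a stack-level map generalizing $\Phi_g$ to all positive braids, reflecting precisely the difficulty you identify in choosing a global section of the fiber family without stratum-by-stratum case analysis. The paper's actual route in type $A$ translates both sides into braid varieties $\cal{U}(\sigma_w)$ and $\cal{X}(\sigma_w\pi)$ via \Cref{prop:v-to-x,prop:g-to-w}, identifies their weight-graded equivariant cohomology with summands of Khovanov--Rozansky homology (\Cref{thm:cohomology-to-hhh}), and invokes a purely algebraic duality (\Cref{thm:ghmn}) to match them, which sidesteps any need for a geometric homotopy inverse. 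If you want to pursue a genuinely geometric proof, you will need a mechanism that does not assume local triviality---for instance, a controlled stratification of $\cal{U}_g$ over which $\Phi_g$ is a fibration, together with a gluing argument of Vietoris--Begle or quasi-fibration type, or an explicit deformation retract as the paper constructs by hand in its $\SL_3$ and $\Sp_4$ cases.
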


The simplest case is $g \in B_+(\bb{C})$.
Here, $\cal{U}_g(\bb{C}) = U_+(\bb{C}) = \cal{V}_g(\bb{C})$ and $\Phi_g$ is a retract from the affine space $U_+(\bb{C})$ onto the point corresponding to the identity of $G(\bb{C})$.
In general, neither $\cal{U}_g(\bb{C})$ nor $\cal{V}_g(\bb{C})$ will be contractible, as we will show in \Cref{sec:low-rank} through examples.

We emphasize that for an arbitrary map of varieties $\Phi : Y \to X$ that induces a homotopy equivalence on $\bb{C}$-points, there may not exist a nonconstant map of varieties $X \to S$ such that $\Phi$ induces a homotopy equivalence of fibers $Y_s(\bb{C}) \to X_s(\bb{C})$ for every $s \in S(\bb{C})$.
For instance, take $\Phi$ to be the projection from a quadric cone onto its axis of symmetry.
Both total spaces are contractible, so $\Phi$ is automatically a homotopy equivalence.
For the map of fibers over $s$ to be a homotopy equivalence as well, $X_s(\bb{C})$ must contract onto the origin of $X(\bb{C}) = \bb{C}$.
Since $X \to S$ is nonconstant, some $s$ must violate this condition.

\subsection{}

Some motivation for \Cref{conj:main} comes from classical results about finite groups of Lie type.
To state them, let us implicitly replace $G$ with its split form over a finite field $\bb{F}$ of good characteristic.

In \cite[Thm.\ 15.1]{steinberg}, Steinberg showed the identity $|\cal{U}(\bb{F})| = |U_+U_-(\bb{F})|$.
In \cite[\S{4}]{kawanaka}, Kawanaka showed an identity equivalent to
\begin{align}\label{eq:kawanaka}
|\cal{U}_g(\bb{F})| = |\cal{V}_g(\bb{F})|;
\end{align}
see \Cref{rem:kawanaka}.
More recently, Lusztig has given a new proof of \eqref{eq:kawanaka} in \cite{lusztig}.

\Cref{conj:main} essentially implies \eqref{eq:kawanaka}.
Indeed, once one checks that $\cal{U}_g$ and $\cal{V}_g$ have the same dimension, \Cref{conj:main} implies that $\Phi_g$ induces an isomorphism between the rational, compactly-supported cohomologies of $\cal{U}_g$ and $\cal{V}_g$.
Since $\Phi_g$ is algebraic, this isomorphism must match their weight filtrations in the sense of mixed Hodge theory.
One can further check that both sides of \eqref{eq:kawanaka} are polynomial functions of $|\bb{F}|$, so by the results explained in \cite{katz}, the virtual weight polynomials of $\cal{U}_g$ and $\cal{V}_g$ specialize to their $\bb{F}$-point counts.

Our main result is evidence for an equivariant analogue of \Cref{conj:main}.
Let $T = B_+ \cap B_-$, so that $B_+ = TU_+ \simeq T \ltimes U_+$.
The map $\Phi$ transports the $B_+$-action on $U_+U_-$ defined by
\begin{align}\label{eq:action}
tu \cdot x_+x_- = (tux_+t^{-1})(tx_-t^{-1})
\end{align}
for all $(t, u) \in T \times U_+$ onto the $B_+$-action on $\cal{U}$ by left conjugation.
Setting
\begin{align}
H_g = B_+ \cap gB_+g^{-1},
\end{align}
we find that these $B_+$-actions restrict to $H_g$-actions on $\cal{V}_g$ and $\cal{U}_g$, respectively.
The $B_+$-equivariance of $\Phi$ thus restricts to $H_g$-equivariance of $\Phi_g$.
Though we do not have a general theorem about $\Phi_g$ itself, we prove:

\begin{thm}\label{thm:cohomology}
If $G$ is a split reductive group of type $A$ over $\bb{F}$, then for all $g \in G(\bb{F})$, there is an isomorphism of bigraded vector spaces
\begin{align}
\gr_\ast^\sf{W} \ur{H}_{c, H_g}^\ast(\cal{U}_g, \QL)
\simeq
\gr_\ast^\sf{W} \ur{H}_{c, H_g}^\ast(\cal{V}_g, \QL),
\end{align}
where $\ur{H}_{c, H_g}^\ast(-, \QL)$ denotes $H_g$-equivariant, compactly-supported $\ell$-adic cohomology and $\sf{W}$ denotes its weight filtration.
\end{thm}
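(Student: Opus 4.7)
The plan is to pass through the triply-graded Khovanov--Rozansky (KR) homology $\mathrm{HHH}$. In type $A$, the $H_g$-equivariant weight-graded cohomologies of $\cal{V}_g$ and of $\cal{U}_g$ should each arise as a specialization of $\mathrm{HHH}$ of a braid closure attached to $g$, and an internal symmetry of $\mathrm{HHH}$ should exchange the two specializations. To set this up, I would first reduce to the case $g = \dot{w}$ for a chosen lift of each Weyl group element: the varieties $\cal{V}_g$ and $\cal{U}_g$ depend only on $gB_+$, and the Bruhat decomposition places every coset $gB_+$ in the $B_+$-orbit of some $\dot{w} B_+$, so the theorem reduces to a finite collection of statements indexed by $W$.

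On the $\cal{V}_{\dot w}$ side, the intersection $U_+U_- \cap \dot w B_+$ is an open Richardson-type variety, and recent advances on braid and positroid varieties express its equivariant compactly-supported cohomology, with its weight filtration, as a graded component of $\mathrm{HHH}$ of the positive braid $\beta_w$ associated to a reduced expression for $w$. On the $\cal{U}_{\dot w}$ side, I would seek a parallel presentation, equivariantly upgrading the point-count identity \eqref{eq:kawanaka} of Kawanaka and Lusztig into a statement about $H_g$-equivariant weight filtrations and matching the result to another graded component of the same $\mathrm{HHH}(\beta_w)$. Producing this presentation, likely via a Soergel-bimodule or affine Springer model for $\cal{U}_{\dot w}$, is where most of the geometric work lies.

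With both cohomologies realized inside $\mathrm{HHH}(\beta_w)$, the isomorphism then follows from a duality internal to KR homology---most plausibly a symmetry of the triply-graded homology of a positive braid that interchanges two of its three gradings---which the title of the paper hints at via the word "twisting." The resulting bigraded isomorphism need not be induced by $\Phi_g$, which is consistent with the statement of the theorem. The principal obstacle is the unipotent side: giving $\cal{U}_{\dot w}$ a description amenable to the Soergel-bimodule and braid-variety machinery, and in particular carrying the $H_g$-action along compatibly, is the novel geometric input and, in my estimation, the hardest step.
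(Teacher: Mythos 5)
Your high-level strategy---route both cohomologies through KR homology and conclude via a duality---is the right one, but you have the wrong duality in mind, and this gap would prevent the proof from going through.

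You propose that $\cal{V}_g$ and $\cal{U}_g$ both contribute graded components of the \emph{same} triply-graded object $\sf{HHH}(\beta_w)$ with $\beta_w = \sigma_w$, and that the desired isomorphism comes from an internal symmetry of that single $\sf{HHH}$ interchanging two of its three gradings. Neither claim is correct. First, $\cal{V}_g$ is \emph{not} the braid variety of $\sigma_w$: as \Cref{prop:v-to-x} shows, $\cal{V}_g$ is $H_g$-equivariantly isomorphic to $\cal{X}_g = \{B : B_+ \xrightarrow{w_0} B \xrightarrow{w_0} gB_+g^{-1}\}$, which is the braid variety of $\sigma_w\pi$, where $\pi = \sigma_{w_0}^2$ is the full twist. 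Thus $\cal{V}_g$ encodes the top $a$-degree of $\sf{HHH}(\sigma_w\pi)$, while $\cal{U}_g$ encodes the bottom $a$-degree of $\sf{HHH}(\sigma_w)$---two \emph{different} braids. Second, the duality needed is not a symmetry of a single $\sf{HHH}$; it is the Serre duality of Gorsky--Hogancamp--Mellit--Nakagane \cite{ghmn} (\Cref{thm:ghmn}), which asserts $\overline{\sf{HHH}}^{0,j,k}(\beta) \simeq \overline{\sf{HHH}}^{r, r+j, k}(\beta\pi)$ for all braids in type $A$. The full twist is the entire point (it is the "twisting" in the title), and your proposal as written does not see it.

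You also misplace where the geometric work lies. You expect that realizing $\cal{U}_{\dot w}$ inside the Soergel-bimodule framework is the hard novel step; in fact, the weight-graded description of $\ur{H}_{c,G}^*(\cal{U}(\beta))$ and $\ur{H}_{c,G}^*(\cal{X}(\beta))$ as $a$-degree slices of $\sf{HHH}_V(\beta)$ was already established in prior work \cite{trinh} (cited here as \Cref{thm:cohomology-to-hhh}). The genuinely new geometric input in this paper is on the other side: the $H_g$-equivariant isomorphism $\cal{X}_g \xrightarrow{\sim} \cal{V}_g$ of \Cref{prop:v-to-x}, which identifies $U_+U_- \cap gB_+$ with the braid variety of $\sigma_w\pi$ and thereby brings the full twist into the picture. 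Once that identification is made, \Cref{prop:g-to-w} converts $H_g$-equivariant cohomology to $G$-equivariant cohomology over $\cal{O}_w$, and the result for $\GL_n$ follows by citing \Cref{thm:cohomology-to-hhh} and \Cref{thm:ghmn}; general type $A$ then follows from \Cref{lem:der,lem:ad}.
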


The proof in \Crefrange{sec:flag}{sec:kr} uses ideas from the rather different world of low-dimensional topology, as we now explain.

Let $\cal{X}_g$ be the variety of Borel subgroups of $G$ in generic position with respect to both $B_+$ and $gB_+g^{-1}$.
In \Cref{sec:flag}, we will construct an isomorphism $\cal{X}_g \to \cal{V}_g$ that transports the $H_g$-action on $\cal{X}_g$ by left conjugation to the $H_g$-action on $\cal{V}_g$ described above.

The variety $\cal{X}_g$ is closely related to the so-called braid varieties that have been studied recently by several authors, including \cite{mellit, sw, cggs, gl}.
To be more precise:
Let $W$ be the Weyl group of $G$, and let $\Br_W^+$ be the positive braid monoid of $W$.
By definition, $\Br_W^+$ is generated by elements $\sigma_w$ for each $w \in W$ modulo
\begin{align}
\sigma_{ww'} &= \sigma_w\sigma_{w'}
	&\text{whenever $\ell(ww') = \ell(w) + \ell(w')$},
\end{align}
where $\ell$ is the Bruhat length function on $W$.
A braid variety is a configuration space of tuples of Borels, where the relative positions of cyclically consecutive Borels have constraints determined by a fixed element $\beta \in \Br_W^+$.
Note that there is a central element $\pi \in \Br_W^+$ known as the full twist and given by $\pi = \sigma_{w_0}^2$, where $w_0$ is the longest element of $W$.
In the notation of \cite[Appendix B]{trinh}, the variety $\cal{X}_g$ is isomorphic to the braid variety attached to $\sigma_w\pi$, where $w$ is the relative position of the pair $(B_+, gB_+g^{-1})$.

In \cite{trinh}, it was shown that the weight filtration on the equivariant, compactly-supported cohomology of the braid variety of $\beta$ encodes a certain summand of a certain triply-graded vector space attached to $\beta$, known as its \emph{HOMFLYPT} or \emph{Khovanov--Rozansky (KR) homology}.
When $W$ is the symmetric group $S_n$, the braid $\beta$ represents the isotopy class of a topological braid on $n$ strands, and the KR homology of $\beta$ is an isotopy invariant of the link closure of $\beta$, up to grading shifts \cite{kr, khovanov}.

At the same time, when $w$ is the relative position of $(B_+, gB_+g^{-1})$, it turns out that $\cal{U}_g$ is closely related to another variety attached to $\sigma_w$ in \cite{trinh}.
Just as $\cal{X}_g$ encodes the ``highest $a$-degree'' of the KR homology of $\sigma_w \pi$, so $\cal{U}_g$ encodes the ``lowest $a$-degree'' of the KR homology of $\sigma_w$.
For $W = S_n$, Gorsky--Hogancamp--Mellit--Nagakane have established an isomorphism between these bigraded vector spaces for general braids $\beta$, not just $\sigma_w$, which they deduce from an analogue of Serre duality for the homotopy category of Soergel bimodules for $S_n$ \cite{ghmn}.
Our \Cref{thm:cohomology} follows from this isomorphism.

This proof suggests that we regard \Cref{conj:main} as a \emph{geometric} realization of the (purely algebraic) Serre duality of \cite{ghmn}.
Moreover, it suggests an extension of \cref{conj:main} with positive braids in place of elements of $W$.
We state the extended conjecture in \Cref{sec:kr}.

The isomorphism of \emph{ibid.}\ categorifies an earlier identity of K\'alm\'an, relating the bivariate HOMFLY series of the link closures of $\beta$ and $\beta\pi$.
In \cite{trinh}, we generalized K\'alm\'an's theorem to the braids associated with arbitrary finite Coxeter groups.
(We expect, but do not show, that \cite{ghmn} admits a similar generalization.)
For Weyl groups, we also interpreted our result as an identity of $\bb{F}$-point counts.
In \Cref{sec:ih}, we review these results, and explain how they recover \eqref{eq:kawanaka}.

\subsection{Acknowledgments}

I thank George Lusztig and Zhiwei Yun for helpful discussions.
During this work, I was supported by a Mathematical Sciences Postdoctoral Fellowship (grant \#{2002238}) from the National Science Foundation.

\section{Reductions}

In this section, we collect lemmas about easy reductions and special cases of the conjecture.

\begin{lem}\label{lem:prod}
If $G = G_1 \times G_2$, where $G_1, G_2$ are again reductive, then \Cref{conj:main} holds if and only if it holds with $G_1$ in place of $G$ and with $G_2$ in place of $G$.
\end{lem}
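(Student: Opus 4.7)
The plan is to observe that every ingredient in \Cref{conj:main} respects the product decomposition, so that the lemma reduces to the elementary fact that a product of maps is a homotopy equivalence if and only if each factor is.

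First I would unpack the product structure: the hypothesis $G = G_1 \times G_2$ yields compatible decompositions $B_\pm = B_{1,\pm} \times B_{2,\pm}$, $U_\pm = U_{1,\pm} \times U_{2,\pm}$, and $\cal{U} = \cal{U}_1 \times \cal{U}_2$, so that $\Phi$ factors as $\Phi_1 \times \Phi_2$. For any $g = (g_1, g_2) \in G(\bb{C})$, since $gB_+ = g_1B_{1,+} \times g_2B_{2,+}$, one reads off
\begin{align*}
\cal{V}_g = \cal{V}_{1,g_1} \times \cal{V}_{2,g_2}, \qquad \cal{U}_g = \cal{U}_{1,g_1} \times \cal{U}_{2,g_2}, \qquad \Phi_g = \Phi_{1,g_1} \times \Phi_{2,g_2}.
\end{align*}

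The ``if'' direction is then immediate: given homotopy inverses $\Psi_{i,g_i}$ of $\Phi_{i,g_i}$, the product $\Psi_{1,g_1} \times \Psi_{2,g_2}$ serves as a homotopy inverse of $\Phi_g$, the necessary homotopies being products of the component homotopies. For the ``only if'' direction I would fix $g_1 \in G_1(\bb{C})$, take $g_2 = 1 \in G_2(\bb{C})$, and exploit the example following \Cref{conj:main}, which shows that $\cal{V}_{2,1}(\bb{C}) = \cal{U}_{2,1}(\bb{C}) = U_{2,+}(\bb{C})$ is contractible with $\Phi_{2,1}$ a deformation retraction onto the identity. Given a homotopy inverse $\Psi$ of $\Phi_g = \Phi_{1,g_1} \times \Phi_{2,1}$, the formula $x \mapsto \pr_1 \Psi(x, 1)$ then yields a homotopy inverse of $\Phi_{1,g_1}$, the required homotopies being obtained by restriction and projection, with the contractibility of $U_{2,+}(\bb{C})$ used to homotope intermediate images back into $\cal{V}_{1,g_1}(\bb{C}) \times \{1\}$. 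The role of $G_1$ and $G_2$ being symmetric, this yields the conjecture for $G_2$ as well.

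I do not expect any serious obstacle here: the argument is purely formal. The only step deserving some care is the ``only if'' direction, where one must verify that homotopy equivalence of a product map allows cancellation of a factor whose map is already known to be a homotopy equivalence---a standard fact given the contractibility of $U_{2,+}(\bb{C})$.
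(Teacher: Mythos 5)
Your proposal is correct and follows the same route as the paper: observe that all the relevant data factor through the product decomposition, so that $\Phi_g = \Phi_{1,g_1} \times \Phi_{2,g_2}$, and then invoke the formal fact about products of homotopy equivalences. The paper states only the factorization and leaves the ``if and only if'' implicit, whereas you spell out the slightly nontrivial ``only if'' direction (correctly choosing $g_2 = 1$ so that the second factor is a known equivalence with contractible source and target, allowing cancellation); this is a reasonable amount of extra detail, not a different argument.
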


\begin{proof}
We can factor $U_\pm = U_{\pm, 1} \times U_{\pm, 2}$ and $\cal{U} = \cal{U}_1 \times \cal{U}_2$ and $\Phi = \Phi^{(1)} \times \Phi^{(2)}$, where $U_{\pm, i}, \cal{U}_i, \Phi^{(i)}$ are the analogues of $U_\pm, \cal{U}, \Phi$ with $G_i$ in place of $G$.
\end{proof}

\begin{lem}\label{lem:der}
Both $\cal{U}_g$ and $\cal{V}_g$ are contained within the derived subgroup $G^\ur{der} \subseteq G$.
In particular, we can assume $G$ is semisimple in \Cref{conj:main}.
\end{lem}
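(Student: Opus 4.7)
The plan rests on the classical fact that every unipotent element of a connected reductive group lies in its derived subgroup. First I would record this by noting that $G/G^\ur{der}$ is a connected reductive abelian group, hence a torus; since a torus has no nontrivial unipotent elements, the image of any $u \in \cal{U}$ in this quotient is trivial, so $\cal{U} \subseteq G^\ur{der}$. Because $U_\pm$ consist of unipotent elements, this already gives $U_\pm \subseteq \cal{U} \subseteq G^\ur{der}$, whence $U_+U_- \subseteq G^\ur{der}$. Intersecting with $gB_+$ then yields $\cal{U}_g, \cal{V}_g \subseteq G^\ur{der}$ at once.

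For the ``in particular'' claim, I would use the almost-direct-product decomposition $G = Z(G)^\circ \cdot G^\ur{der}$. Since the connected center $Z(G)^\circ$ commutes with $T$ and is itself a torus, it lies in $T$, hence in $B_+$. Writing $g = zg'$ with $z \in Z(G)^\circ$ and $g' \in G^\ur{der}$ therefore gives $gB_+ = g'B_+$, so without loss of generality $g \in G^\ur{der}$. On the other hand, $B_\pm \cap G^\ur{der}$ is a Borel of the semisimple group $G^\ur{der}$ with unipotent radical $U_\pm$ (this follows from the Levi decomposition $B_+ = T \ltimes U_+$ and the fact that $U_+$ already lies in $G^\ur{der}$). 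The varieties $\cal{U}_g$ and $\cal{V}_g$ attached to $(G, g)$ therefore coincide with their analogues for $(G^\ur{der}, g)$, and the map $\Phi_g$ is transported to the corresponding map for $G^\ur{der}$. Consequently \Cref{conj:main} for $G$ is equivalent to \Cref{conj:main} for the semisimple group $G^\ur{der}$.

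There is no serious obstacle in this argument. The only point requiring a moment of care is to confirm that all the structural data $(B_\pm, U_\pm, g)$ descend compatibly from $G$ to $G^\ur{der}$, and this is immediate from the containment $Z(G)^\circ \subseteq T$.
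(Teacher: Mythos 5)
Your argument is correct and follows essentially the same route as the paper: show $\cal{U} \subseteq G^\ur{der}$ by noting that $G/G^\ur{der}$ is a torus with no nontrivial unipotents, and then deduce $U_+U_- \subseteq G^\ur{der}$. The only expository difference worth flagging is that the paper explicitly invokes the fact that algebraic group homomorphisms preserve Jordan decompositions (so that the image of $u\in\cal{U}$ in the quotient is again unipotent), which you take for granted; conversely, you spell out the ``in particular'' reduction more carefully than the paper does, observing that $Z(G)^\circ \subseteq T \subseteq B_+$ lets you replace $g = zg'$ by $g' \in G^\ur{der}$ without changing $gB_+$, and that $(B_\pm \cap G^\ur{der}, U_\pm)$ are the Borel and unipotent-radical data for $G^\ur{der}$. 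Both points are standard and your account is sound.
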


\begin{proof}
It is enough to show $\cal{U} \subseteq G^\ur{der}$, because in that case, we also have $U_+U_- \subseteq G^\ur{der}G^\ur{der} = G^\ur{der}$.
Suppose instead that $\cal{U} \not\subseteq G^\ur{der}$.
Then $G/G^\ur{der}$ contains nontrivial unipotent elements, because homomorphisms of algebraic groups preserve Jordan decompositions \cite[\S{9.21}]{milne}.
But $G/G^\ur{der}$ is isogeneous to the center of $G$ by \cite[Ex.\ 19.25]{milne}, so it is a torus, in which the only unipotent element is the identity.
\end{proof}

\begin{lem}\label{lem:ad}
The map $\Phi_g$ is unchanged, up to composition with maps that induce homeomorphisms on $\bb{C}$-points, when we replace $G$ with the adjoint group $G^{\ad} = G/Z(G)$ and $g$ with its image in $G^{\ad}$.
In particular, \Cref{conj:main} is equivalent to its analogue where we replace $G$ with any quotient by a central isogeny, and $g$ with its image in that quotient.
\end{lem}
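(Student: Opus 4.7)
The plan is to track what a central isogeny $\pi : G \to G'$ (with finite kernel $Z \subseteq Z(G)$) does to each of the varieties appearing in the definition of $\Phi_g$. The key observation is that $Z(G)$ lies in every maximal torus, in particular in $T \subseteq B_+ \cap B_-$, so $Z(G) \cap U_\pm \subseteq T \cap U_\pm = \{1\}$. In characteristic zero, $Z(G)$ is an extension of a finite group by a torus, and neither contains nontrivial unipotent elements, so $Z(G) \cap \cal{U} = \{1\}$ as well. Consequently, the restrictions of $\pi$ to $U_+$, $U_-$, $U_+U_-$, and $\cal{U}$ are all bijective algebraic morphisms. Since $\pi$ is étale in characteristic zero, each such restriction is a local biholomorphism on $\bb{C}$-points in the analytic topology, and being bijective, is therefore a homeomorphism.

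Next I would verify that $\pi^{-1}(\bar g \bar B_+) = g B_+$. This follows from $Z(G) \subseteq B_+$: for $u \in G$, the condition $g^{-1}u \in B_+$ is equivalent to $\bar g^{-1} \bar u \in \bar B_+$, since any preimage in $G$ of an element of $\bar B_+$ differs from an element of $B_+$ by an element of $Z(G) \subseteq B_+$. Combined with the preceding paragraph, this shows that $\pi$ restricts to bijections $\cal{U}_g \to \bar{\cal{U}}_{\bar g}$ and $\cal{V}_g \to \bar{\cal{V}}_{\bar g}$ that are homeomorphisms on $\bb{C}$-points.

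Finally, because $\pi$ is a group homomorphism, the defining formula $\Phi(x_+x_-) = x_+x_-x_+^{-1}$ commutes with $\pi$, giving the compatibility $\pi \circ \Phi_g = \bar{\Phi}_{\bar g} \circ \pi$ after restriction. The first claim of the lemma follows, and the second is then immediate: $\Phi_g$ defines half of a homotopy equivalence on $\bb{C}$-points if and only if $\bar{\Phi}_{\bar g}$ does. There is no serious obstacle here; the only point requiring care is the disjointness of $Z(G)$ from $U_+$, $U_-$, and $\cal{U}$, which is standard for reductive groups in characteristic zero.
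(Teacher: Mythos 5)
Your proof is correct and takes essentially the same route as the paper's: both arguments rest on the facts that $Z(G)$ is semisimple (so disjoint from $\cal{U}$), lies in $T$ (so disjoint from $U_\pm$), and that the quotient map commutes with $\Phi$, producing a commuting square in which the vertical maps are homeomorphisms on $\bb{C}$-points. Your argument is slightly more explicit on two points the paper elides—using \'etaleness to upgrade a bijective morphism to an analytic homeomorphism, and treating a general central isogeny directly rather than passing through $G^{\ad}$—but the content is the same.
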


\begin{proof}
Let $\bar{U}_\pm$ and $\bar{\cal{U}}$ be the respective analogues of $U_\pm$ and $\cal{U}$ with $G^{\ad}$ in place of $G$.
Again using the preservation of Jordan decomposition, and the fact that central elements of $G$ are semisimple, we can check that the quotient map $G \to G^{\ad}$ restricts to maps $U^\pm \xrightarrow{\sim} \bar{U}^\pm$ and $\cal{U} \to \bar{\cal{U}}$ that give rise to bijections on field-valued points.
Writing $\bar{\Phi} : \bar{U}_+\bar{U}_- \to \bar{\cal{U}}$ for the analogue of $\Phi$, we see that the diagram
\begin{equation}
\begin{tikzpicture}[baseline=(current bounding box.center), >=stealth]
\matrix(m)[matrix of math nodes, row sep=2.5em, column sep=2.5em, text height=2ex, text depth=0.5ex]
{ 		
	U_+U_-
		&\cal{U}\\	
	\bar{U}_+\bar{U}_-
		&\bar{\cal{U}}\\
};
\path[->, font=\scriptsize, auto]
(m-1-1)
	edge node{$\Phi$} (m-1-2)
	edge node[below,sloped]{$\approx$} (m-2-1)
(m-1-2)
	edge node[sloped]{$\approx$} (m-2-2)
(m-2-1)
	edge node{$\bar{\Phi}$} (m-2-2);
\end{tikzpicture}
\end{equation}
commutes.
Now the result follows.
\end{proof}

The following lemma is motivated by the Bruhat decomposition
\begin{align}
G = \coprod_{w \in W} U_+\dot{w}B_+,
\end{align}
where $w \mapsto \dot{w}$ is any choice of set-theoretic lift from $W \simeq N_G(T)/T$ into $N_G(T)$.
Note that $\dot{w}B_+$ only depends on $w$ because $T \subseteq B_+$.

\begin{lem}\label{lem:translate}
If \Cref{conj:main} holds for some $g \in G(\bb{C})$, then it holds with $ug$ in place of $g$, for all $u \in U_+(\bb{C})$.
In particular, if \Cref{conj:main} holds for all $g \in N_G(T)(\bb{C})$, then it holds in general.
\end{lem}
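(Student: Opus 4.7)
The plan is to reduce $\Phi_{ug}$ to $\Phi_g$ by constructing a commutative square of variety isomorphisms. First I would define two natural maps: left multiplication by $u$ restricts to $\lambda_u \colon \cal{V}_g \to \cal{V}_{ug}$, because $u \in U_+$ satisfies $u \cdot U_+U_- = U_+U_-$ and trivially $u \cdot gB_+ = ugB_+$; and conjugation by $u$ restricts to $c_u \colon \cal{U}_g \to \cal{U}_{ug}$, because $u\cal{U}u^{-1} = \cal{U}$ while $u^{-1} \in B_+$ gives $ugu^{-1}B_+ = ugB_+$. Both maps are isomorphisms of varieties, hence homeomorphisms on $\bb{C}$-points.

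Next I would verify the identity $c_u \circ \Phi_g = \Phi_{ug} \circ \lambda_u$ by a direct calculation. For $x_+x_- \in \cal{V}_g$, we may regroup $\lambda_u(x_+x_-) = (ux_+) \cdot x_-$ with $ux_+ \in U_+$, and then
\[
\Phi_{ug}\bigl((ux_+) \cdot x_-\bigr) = (ux_+)\,x_-\,(ux_+)^{-1} = u(x_+x_-x_+^{-1})u^{-1} = c_u(\Phi_g(x_+x_-)).
\]
Given that the square commutes with vertical homeomorphisms, $\Phi_g$ forms half of a homotopy equivalence between $\cal{V}_g(\bb{C})$ and $\cal{U}_g(\bb{C})$ if and only if $\Phi_{ug}$ does, which proves the first assertion.

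For the second assertion, I would invoke the Bruhat decomposition recalled just above the lemma: every $g \in G(\bb{C})$ satisfies $gB_+ = u\dot{w}B_+$ for some $u \in U_+(\bb{C})$ and $w \in W$. Since $\cal{U}_g$, $\cal{V}_g$, and $\Phi_g$ depend only on the coset $gB_+$, we may assume $g = u\dot{w}$ and apply the first part to reduce to the case $g = \dot{w} \in N_G(T)(\bb{C})$. I do not foresee a real obstacle here: the only substantive inputs are the two identities $uU_+ = U_+$ and $u^{-1}B_+ = B_+$, both immediate from $u \in U_+ \subseteq B_+$, together with the obvious $B_+$-equivariance of $\Phi$ that is already exploited elsewhere in the introduction.
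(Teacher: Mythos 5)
Your proof is correct and follows essentially the same route as the paper: you exhibit $\Phi_{ug}$ and $\Phi_g$ as conjugate under variety isomorphisms (left multiplication by $u$ on the $\cal{V}$ side, conjugation by $u$ on the $\cal{U}$ side), then conclude via Bruhat; the paper simply writes the same square with the vertical arrows inverted, phrasing it as a factorization $\cal{V}_{ug} \xrightarrow{\sim} \cal{V}_g \xrightarrow{\Phi_g} \cal{U}_g \xrightarrow{\sim} \cal{U}_{ug}$.
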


\begin{proof}
We can factor $\Phi_{ug}$ as a composition
\begin{align}
\cal{V}_{ug} \xrightarrow{\sim} \cal{V}_g \xrightarrow{\Phi_g} \cal{U}_g \xrightarrow{\sim} \cal{U}_{ug},
\end{align}
where the first arrow is left multiplication by $u^{-1}$, and the last arrow is left conjugation by $u$.
Since these are both isomorphisms of varieties, we get the first assertion of the lemma.
The second follows from the first via Bruhat.
\end{proof}

If $P_+$ and $P_-$ are opposed parabolic subgroups of $G$ containing $B_+$ and $B_-$, respectively, and $L = P_+ \cap P_-$ is their common Levi subgroup, then we write $U_{\pm, P}$ for the unipotent radical of $P_\pm$, and write $B_{\pm, L}, U_{\pm, L}, \cal{U}_L$ for the analogues of $B_\pm, U_\pm, \cal{U}$ with $L$ in place of $G$.
Thus we have
\begin{align}
P_\pm &= LU_{\pm, P} \simeq L \ltimes U_{\pm, P},\\
B_\pm &= B_{\pm, L}U_{\pm, P} \simeq B_{\pm, L} \ltimes U_{\pm, P},\\
U_\pm &= U_{\pm, L}U_{\pm, P} \simeq U_{\pm, L} \ltimes U_{\pm, P}.
\end{align}
If $g \in L$, then we write $\cal{U}_{L, g}, \cal{V}_{L, g}, \Phi_{L, g}$ for the analogues of $\cal{U}_g, \cal{V}_g, \Phi_g$ with $L$ in place of $G$.

\begin{lem}\label{lem:levi}
Let $P_\pm \supseteq B_\pm$ be opposed parabolic subgroups of $G$, and let $L = P_+ \cap P_-$.
Then for all $g \in L(\bb{C})$, we have isomorphisms of algebraic varieties
\begin{align}
\cal{U}_g &\simeq \cal{U}_{L, g}U_{+, P} \simeq \cal{U}_{L, g} \times U_{+, P},\\
\cal{V}_g &\simeq \cal{V}_{L, g}U_{+, P} \simeq \cal{V}_{L, g} \times U_{+, P}.
\end{align}
In particular, if $g \in G(\bb{C})$ belongs to a Levi subgroup of $G$, then in \Cref{conj:main}, we can replace $G$ with that Levi subgroup.
\end{lem}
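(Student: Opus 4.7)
The plan is to exploit the Levi decomposition $P_+ = L \ltimes U_{+, P}$. Since $g \in L$, we have $g B_+ = g B_{+, L} \cdot U_{+, P} \subseteq P_+$; in particular, both $\cal{U}_g$ and $\cal{V}_g$ lie in $P_+$, and each of their points has a unique expression as a product $\ell u_P$ with $\ell \in L$ and $u_P \in U_{+, P}$.

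For the first isomorphism, I would use the standard fact that the unipotent variety of $P_+$ coincides with the preimage $\cal{U}_L \cdot U_{+, P}$ of $\cal{U}_L$ under the quotient map $P_+ \to L$. Then the condition $\ell u_P \in g B_+$ reduces to $\ell \in g B_{+, L}$ with $u_P$ arbitrary, so imposing unipotence gives $\cal{U}_g = \cal{U}_{L, g} \cdot U_{+, P}$. This is exactly the image of $\cal{U}_{L, g} \times U_{+, P}$ under multiplication $L \times U_{+, P} \to P_+$, which restricts to the asserted isomorphism of varieties.

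For the second isomorphism, the analysis requires more care. Combining $U_+ = U_{+, L} U_{+, P}$ with the observation that $U_- \cap P_+ = U_{-, L}$ (from the uniqueness of the decomposition $U_- = U_{-, L} U_{-, P}$), any $x = x_+ x_- \in U_+ U_- \cap P_+$ has $x_- \in U_{-, L}$, and we may write $x_+ = y_+ v_+$ with $y_+ \in U_{+, L}$ and $v_+ \in U_{+, P}$. Since $L$ normalizes $U_{+, P}$, I would rewrite
\begin{align*}
x = y_+ v_+ x_- = (y_+ x_-) \cdot (x_-^{-1} v_+ x_-),
\end{align*}
which displays the $L$-component as $y_+ x_-$ and the $U_{+, P}$-component as $x_-^{-1} v_+ x_-$. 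The condition $x \in g B_+$ then becomes $y_+ x_- \in g B_{+, L}$ with $v_+$ free, and since $y_+ x_- \in U_{+, L} U_{-, L}$ by construction, this is precisely $y_+ x_- \in \cal{V}_{L, g}$. Hence $\cal{V}_g = \cal{V}_{L, g} \cdot U_{+, P}$, again realized as an isomorphism via the multiplication map.

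For the final assertion, I would track $\Phi_g$ under these isomorphisms. A short computation inside $L \ltimes U_{+, P}$ shows that if $x \in \cal{V}_g$ corresponds to $(\ell, u) \in \cal{V}_{L, g} \times U_{+, P}$, then $\Phi_g(x) = x_+ x_- x_+^{-1}$ has $L$-component equal to $\Phi_{L, g}(\ell)$ and $U_{+, P}$-component depending algebraically on $(\ell, u)$. Thus $\Phi_g$ fits into a commutative square with $\Phi_{L, g}$ along the bottom and the two $U_{+, P}$-bundle projections as the vertical maps. Since $U_{+, P}(\bb{C})$ is affine and hence contractible, these projections are homotopy equivalences, and so $\Phi_g$ is a homotopy equivalence if and only if $\Phi_{L, g}$ is. The main hurdle is the bookkeeping for $\cal{V}_g$; once that is set up, the remaining compatibilities are routine.
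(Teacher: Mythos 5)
Your proof is correct and follows essentially the same route as the paper: decompose $P_+ = L \ltimes U_{+,P}$, use preservation of Jordan decomposition for $\cal{U}_g$, use $U_- \cap P_+ = U_{-,L}$ for $\cal{V}_g$, and then read off the homotopy equivalence from the fact that $U_{+,P}(\bb{C})$ is contractible.

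The one noteworthy difference is in the final step. The paper parameterizes $\cal{V}_g \simeq U_{+,P} \times \cal{V}_{L,g}$ (with $U_{+,P}$ on the \emph{left}), computes $\Phi_g(x y_+ y_-) = \Ad_x\bigl(\Phi_{L,g}(y_+y_-)\bigr)$ directly, and then constructs an explicit homotopy from $\Phi_g$ to $i_{L,g} \circ \Phi_{L,g} \circ p_{L,g}$ by contracting $U_{+,P}(\bb{C})$ to the identity. You instead keep $U_{+,P}$ on the right, observe that $\pi_L \circ \Phi_g = \Phi_{L,g} \circ \pi_L$ (since $\pi_L : P_+ \to L$ is a group homomorphism killing $v_+$), and invoke the 2-out-of-3 property of homotopy equivalences for the resulting commutative square with contractible-bundle projections as verticals. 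Both arguments are valid; yours is marginally less constructive (no explicit homotopy written down) but a touch cleaner to state, while the paper's version hands you the deformation explicitly. No gap either way.
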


\begin{proof}
Since the decomposition $P_+ \simeq L \ltimes U_{+, P}$ preserves Jordan decompositions, we have $\cal{U} \cap P_+ = \cal{U}_L U_{+, P} \simeq \cal{U}_L \times U_{+, P}$.
Intersecting with $gB_+$, we get
\begin{align}
\cal{U} \cap gB_+ = (\cal{U}_L \cap gB_{+, L})U_{+, P} \simeq (\cal{U}_L \cap gB_{+, L}) \times U_{+, P}.
\end{align}
Next, $U_+U_- \cap gB_+ \subseteq gB_+ \subseteq P_+$ and $U_- \cap P_+ = U_{-, L} \cap P_+$ together imply $U_+U_- \cap gB_+ = U_+U_{-, L} \cap gB_+$, from which
\begin{align}
U_+U_- \cap gB_+
&= U_{+, L} U_{+, P} U_{-, L} \cap gB_{+, L}U_{+, P}\\
&= (U_{+, L}U_{-, L} \cap gB_{+, L})U_{+, P}\\
&\simeq (U_{+, L}U_{-, L} \cap gB_{+, L}) \times U_{+, P}.
\end{align}
So it remains to prove the last assertion in the lemma.
For this, it is more convenient to use the decomposition $\cal{V}_g = U_{+, P}\cal{V}_{L, g} \simeq U_{+, P} \times \cal{V}_{L, g}$.
For all $(x, y_+, y_-) \in U_{+, P} \times U_{+, L} \times U_{-, L}$, observe that 
\begin{align}
\Phi_g(xy_+y_-) = xy_+y_-y_+^{-1}x^{-1} = \Ad_x(\Phi_{L, g}(y_+y_-)),
\end{align}
where $\Ad_x(u) = xux^{-1}$.
A choice of deformation retract from $U_{+, P}(\bb{C})$ onto $\{1\}$ induces a homotopy from $\Ad_x : \cal{U}_g \to \cal{U}_g$ onto $\id : \cal{U}_g \to \cal{U}_g$, which in turn induces a homotopy from $\Phi_g : \cal{V}_g \to \cal{U}_g$ onto the composition
\begin{align}
\cal{V}_g \xrightarrow{p_{L, g}} \cal{V}_{L, g} \xrightarrow{\Phi_{L, g}} \cal{U}_{L, g} \xrightarrow{i_{L, g}} \cal{U}_g,
\end{align}
where $p_{L, g} :\cal{V}_g \to \cal{V}_{L, g}$ is the retract induced by the projection $U_{+, P}(\bb{C}) \to \{1\}$ and $i_{L, g} : \cal{U}_{L, g} \to \cal{U}_g$ is the section induced by the inclusion $\{1\} \to U_{+, P}(\bb{C})$.
Therefore, $\Phi_{L, g}$ being half of a homotopy equivalence is equivalent to $\Phi_g$ being half of a homotopy equivalence.
\end{proof}

\begin{lem}\label{lem:id}
\Cref{conj:main} holds for $g \in B_+(\bb{C})$.
\end{lem}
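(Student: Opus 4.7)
The plan is to reduce to $g = 1$ by noting that $(\cal{U}_g, \cal{V}_g, \Phi_g)$ depends only on the coset $gB_+$, which equals $B_+$ itself when $g \in B_+(\bb{C})$, and then to identify $\cal{U}_g = \cal{V}_g = U_+$ with $\Phi_g$ the constant map to the identity.

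First, I would compute $\cal{U}_g = \cal{U} \cap B_+$. The semidirect product decomposition $B_+ \simeq T \ltimes U_+$ combined with the preservation of Jordan decompositions under the projection $B_+ \to T$ (as already invoked in \Cref{lem:der}) shows that an element of $B_+$ is unipotent if and only if its $T$-component is trivial, i.e., if and only if it lies in $U_+$. Hence $\cal{U}_g = U_+$. Next, for $\cal{V}_g = U_+U_- \cap B_+$, the uniqueness of the factorization $U_+ \times U_- \to U_+U_-$ combined with $U_- \cap B_+ = \{1\}$ forces the $U_-$-component of any element of $\cal{V}_g$ to be trivial; this gives $\cal{V}_g = U_+$ as well.

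Under these identifications, a direct computation yields $\Phi_g(x_+) = \Phi_g(x_+ \cdot 1) = x_+ \cdot 1 \cdot x_+^{-1} = 1$, so $\Phi_g$ is the constant map from $U_+$ onto its identity element. Since $U_+(\bb{C})$ is an affine space and hence contractible, every self-map of $U_+(\bb{C})$ is a homotopy equivalence; in particular $\Phi_g$ is half of one, paired for instance with the identity map of $U_+$. I expect no real obstacle here: the lemma is essentially an unwinding of definitions, once Jordan decomposition in $B_+$ and uniqueness of the $U_+U_-$ factorization are in hand.
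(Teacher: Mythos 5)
Your proof is correct, and it is exactly the ``direct computation'' approach that the paper itself nods to in the sentence immediately preceding its proof of \Cref{lem:id} (and already sketches in the introduction), but then deliberately sets aside. The paper's written proof instead reduces $g$ to $1$ (as you do) and then invokes \Cref{lem:levi} with $L = T$ the maximal torus, which collapses both $\cal{U}_g$ and $\cal{V}_g$ to the single point $\{1\}$ inside $T$, leaving nothing further to check. That route buys economy of argument by reusing the already-proved Levi reduction, at the cost of hiding what $\cal{U}_g$, $\cal{V}_g$, and $\Phi_g$ actually look like. Your route is more explicit: you identify both varieties with $U_+$ via (i) Jordan decomposition in $B_+ \simeq T \ltimes U_+$, which forces $\cal{U} \cap B_+ = U_+$, and (ii) uniqueness of the $U_+ \times U_-$ factorization together with $U_- \cap B_+ = \{1\}$, which forces $U_+U_- \cap B_+ = U_+$, and you then observe directly that $\Phi_g$ is the constant map to the identity, a homotopy equivalence since $U_+(\bb{C})$ is contractible. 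Both arguments are sound; yours trades the appeal to \Cref{lem:levi} for a short self-contained calculation, which is arguably the more illuminating choice for this base case.
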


As observed in the introduction, this can be proved by computing $\cal{U}_g(\bb{C}), \cal{V}_g(\bb{C})$, and $\Phi_g$ directly.
Alternatively:

\begin{proof}
Since $\cal{U}_g, \cal{V}_g, \Phi_g$ only depend on $gB_+$, we can assume $g = 1$.
Then \Cref{lem:levi} reduces us to the case $G = T$, where $\cal{U}_g = \{1\} = \cal{V}_g$.
\end{proof}

\section{Examples}\label{sec:low-rank}

\subsection{}

By \Crefrange{lem:prod}{lem:ad}, it suffices to check \Cref{conj:main} for one representative $G$ from each central isogeny class of semisimple algebraic group with connected Dynkin diagram.
Moreover, by \Cref{lem:translate}, it suffices to fix a lift $w \mapsto \dot{w}$ from $W$ into $N_G(T)$ and check \Cref{conj:main} for cosets of the form $gB_+ = \dot{w}B_+$.
In this section, we settle the conjecture completely for $G \in \{\SL_2, \SL_3\}$, and check one further case in which $G = \Sp_4$.

Without loss of generality, we can always take $U_+$, \emph{resp.}\ $U_-$, to be the group of unipotent upper-triangular, \emph{resp.}\ unipotent lower-triangular, matrices in $G$.
To produce defining equations for $\cal{U}$, we use the coefficients of the map that sends $g \in G$ to its characteristic polynomial.
We write $e$ for the identity element of $W$.

\subsection{The Group $\mathrm{SL}_2$}

We have
\begin{align}
\cal{U} = \left\{g \in \SL_2 \mid
	\tr(g) = 2
\right\}.
\end{align}
The map $\Phi : U_+U_- \to \cal{U}$ is
\begin{align}
\Phi\pa{\pmat{1&b\\ &1}\pmat{1&\\ b'&1}}
=	\pmat{1 + bb'&-b^2b'\\ b'&1 - bb'}.
\end{align}
We can write $W = \{e, w_0\}$.
By \Cref{lem:translate,lem:id}, it suffices to check $gB_+ = \dot{w}_0B_+$.
The varieties $\cal{U}_g$ and $\cal{V}_g$ are
\begin{align}
\cal{U}_g
&=	\left\{\pmat{ &-\frac{1}{X} \\ X &2} \,\middle|\, X \neq 0\right\},\\[1ex]
\cal{V}_g
&=	\left\{\pmat{ &-\frac{1}{b'} \\ b' &1} \,\middle|\, b' \neq 0\right\}.
\end{align}
The coordinates define isomorphisms $b' : \cal{U}_g \xrightarrow{\sim} \bb{G}_m$ and $X : \cal{V}_g \xrightarrow{\sim} \bb{G}_m$.
The map $\Phi_g$ is an isomorphism of varieties, corresponding to setting $X = b'$.

\subsection{The Group $\mathrm{SL}_3$}

Let $\Lambda^2(g)$ denote the exterior square of a matrix $g$.
From the identity $2\tr(\Lambda^2(g)) = \tr(g)^2 - \tr(g^2)$, we have
\begin{align}
\cal{U} = \left\{g \in \SL_3 \,\middle|
	\begin{array}{l}
	\tr(g) = 3,\\
	\tr(\Lambda^2(g)) = 3
	\end{array}
	\right\}
= \left\{g \in \SL_3 \,\middle|
	\begin{array}{l}
	\tr(g) = 3,\\
	\tr(g^2) = 3
	\end{array}
	\right\}.
\end{align}
The map $\Phi : U_+U_- \to \cal{U}$ is
\begin{align}
&\Phi\pa{\pmat{1&a&b\\ &1&c\\ &&1}\pmat{1&&\\ a'&1&\\ b'&c'&1}}\\[1ex]
&=	\pmat{1 + aa' + bb'
			&bc' - a^2a' -abb'
			&-aba' - b^2b' - bcc' + a^2ca' + abcb'\\
		a' + cb'
			&1 - aa' + cc' - acb'
			&-ba' + aca' - bcb' - c^2c' + ac^2b'\\
		b'
			&c' - ab'
			&1 - bb' - cc' + acb'}.
\end{align}
We can write $W = \{e, s, t, ts, st, w_0\}$, where $s$ and $t$ are the simple reflections.
The simple reflections lift to elements of $N_G(T)$ contained in proper Levi subgroups of $G$, so by the $\SL_2$ case and \Cref{lem:der,lem:translate,lem:levi}, it remains to consider $gB_+ = \dot{w}B_+$ for $w \in \{ts, st, w_0\}$.
In what follows, we choose $\dot{s}, \dot{t}$ so that
\begin{align}
\dot{s}B_+ \subseteq \left\{\pmat{ &\ast &\ast \\ \ast &\ast &\ast \\ & &\ast}\right\}
\quad\text{and}\quad
\dot{t}B_+ \subseteq \left\{\pmat{ \ast &\ast &\ast \\ & &\ast \\ &\ast &\ast }\right\}.
\end{align}

\subsubsection{}

If $w = ts$, then the varieties $\cal{U}_g$ and $\cal{V}_g$ are
\begin{align}
\cal{U}_g
	&= \left\{\pmat{
		&Y &C \\ 
		& &Z \\ 
		\frac{1}{YZ} &-\frac{1}{Z}(3 + \frac{C}{YZ}) &3
	}\,\middle|\,
		Y, Z \neq 0
\right\},\\[1ex]
\cal{V}_g
	&= \left\{\pmat{ 
		&-\frac{1}{a'} &b \\ 
		& &c \\ 
		-\frac{a'}{c} &-\frac{1}{c} &1
	}\,\middle|\,
		a', c \neq 0
\right\}.
\end{align}
The coordinates define isomorphisms $(Y, Z, C) : \cal{U}_g \xrightarrow{\sim} \bb{G}_m^2 \times \bb{A}^1$ and $(c, a', b) : \cal{V}_g \xrightarrow{\sim} \bb{G}_m^2 \times \bb{A}^1$.
The map
\begin{align}
\Phi_g(x_+x_-)
&= \pmat{ &-\frac{1}{a'} &b + \frac{c}{a'} \\ & &c \\ -\frac{a'}{c} &-\frac{1}{c}(2 - \frac{ba'}{c}) &3}
\end{align}
is an isomorphism of varieties, corresponding to $(Y, Z, C) = (-\frac{1}{a'}, c, b + \frac{c}{a'})$.

\subsubsection{}

If $w = st$, then the varieties are
\begin{align}
\cal{U}_g
	&= \left\{\pmat{
		& &\frac{1}{XY} \\ 
		X &A &-\frac{1}{Y}(3 - 3A + A^2) \\ 
		&Y &3 - A
	}\,\middle|\,
		X, Y \neq 0
\right\},\\[1ex]
\cal{V}_g
	&= \left\{\pmat{ 
		& &b \\ 
		a' &1 + \frac{c}{ba'} &c \\ 
		&\frac{1}{ba'} &1
	}\,\middle|\,
		b, a' \neq 0
\right\}.
\end{align}
The coordinates define isomorphisms $(X, Y, A) : \cal{U}_g \xrightarrow{\sim} \bb{G}_m^2 \times \bb{A}^1$ and $(b, a', c) : \cal{V}_g \xrightarrow{\sim} \bb{G}_m^2 \times \bb{A}^1$.
The map
\begin{align}
\Phi_g(x_+x_-)
&= \pmat{ & &b \\ a' &2 + \frac{c}{ba'} &-(ba' + \frac{c^2}{ba'} + c) \\ &\frac{1}{ba'} &1 - \frac{c}{ba'}}
\end{align}
is an isomorphism of varieties, corresponding to $(X, Y, A) = (a', \frac{1}{ba'}, 1 + \frac{c}{ba'})$.

\subsubsection{}

If $w = w_0$, then the varieties are
\begin{align}
\cal{U}_g 
	&= \left\{\pmat{ 
		& &Z \\ 
		&-\frac{1}{XZ} &C \\ 
		X &A &3 + \frac{1}{XZ}
	}\,\middle|
		\begin{array}{l}
			X, Z \neq 0,\\
			(1 + \frac{1}{XZ})^3 + \frac{AC}{XZ} = 0
		\end{array}
\right\},\\[1ex]
\cal{V}_g 
	&= \left\{\pmat{ 
		&&b \\ 
		&1 + c'c &c \\ 
		b'&c'&1
	}\,\middle|
		\begin{array}{l}
			b, b' \neq 0,\\
			1 + bb' + (bb')(cc') = 0
		\end{array}
\right\}.
\end{align}
The coordinates define isomorphisms
\begin{align}
\cal{U}_g
	&\xrightarrow{\sim} 
	\{(X, Z, A, C) \in \bb{G}_m^2 \times \bb{A}^2 \mid (1 + \tfrac{1}{XZ})^3 + \tfrac{AC}{XZ} = 0\},\\
\cal{V}_g
	&\xrightarrow{\sim} 
	\{(b, b', c, c') \in \bb{G}_m^2 \times \bb{A}^2 \mid 1 + bb' + (bb')(cc') = 0\}.
\end{align}
The map 
\begin{align}
\Phi_g(x_+x_-)
&=	
	\pmat{&&b\\ &1+cc'&(1 + \frac{1}{bb'})c\\ b'&(1 + bb')c'&2-cc'}
\end{align}
corresponds to setting $(X, Z, A, C) = (b', b, (1 + bb')c', (1 + \frac{1}{bb'})c)$.
Note that $\cal{U}_g$ and $\cal{V}_g$ are \emph{not} isomorphic as varieties.

\begin{prop}
For $G = \SL_3$ and $gB_+ = \dot{w}_0B_+$, the map $\Phi_g$ is neither injective nor surjective on $\bb{C}$-points, but does define half of a homotopy equivalence.
\end{prop}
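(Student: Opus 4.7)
The plan is to factor out a trivial $\bb{G}_m$-factor, reduce to an assertion about a map of threefolds, and then apply the classical gluing theorem for homotopy equivalences.

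First, the reduction: setting $u := bb'$ and using $b$ as the auxiliary $\bb{G}_m$-coordinate gives an isomorphism $\cal{V}_g \simeq \bb{G}_m \times V$, where
\begin{align}
V = \{(u, c, c') \in \bb{G}_m \times \bb{A}^2 \mid 1 + u + ucc' = 0\}.
\end{align}
Analogously, $s := XZ$ and $Z$ give $\cal{U}_g \simeq \bb{G}_m \times U$, with
\begin{align}
U = \{(s, A, C) \in \bb{G}_m \times \bb{A}^2 \mid (s+1)^3 + s^2 AC = 0\}.
\end{align}
Under these identifications, $\Phi_g$ becomes $\id_{\bb{G}_m} \times \phi$, where $\phi(u, c, c') = (u, (1+u)c', (1 + u^{-1})c)$. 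So it suffices to prove both assertions for $\phi$.

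Next, both $V$ and $U$ fiber over $\bb{G}_m$ via $u = s$; over $u \neq -1$, the map $\phi$ restricts to an isomorphism of fibers (each a $\bb{G}_m$-hyperbola in $\bb{A}^2$), while over $u = -1$ the fibers are the reducible curves $\{cc' = 0\}$ and $\{AC = 0\}$, each a union of two coordinate axes, and $\phi$ crushes the first of these to the origin $(0, 0)$ of the second. Hence the preimage of $(-1, 0, 0) \in U$ under $\phi$ is positive-dimensional, proving non-injectivity; and any $(-1, A, 0) \in U$ with $A \neq 0$ has no preimage, proving non-surjectivity.

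For the homotopy equivalence, fix $\epsilon \in (0, 1)$ and let $T := \{u \in \bb{G}_m \mid |u+1| < \epsilon|u|\}$, a topological disk around $-1$. Let $V_T, U_T$ be the $u$-preimages of $T$, and $V_0 := V|_{u \neq -1}$, $U_0 := U|_{s \neq -1}$; these yield open covers $V = V_T \cup V_0$ and $U = U_T \cup U_0$ respected by $\phi$. Since $1 + u$ is invertible on $V_0$, both $\phi|_{V_0}$ and $\phi|_{V_0 \cap V_T}$ are algebraic isomorphisms, with inverse $(s, A, C) \mapsto (s, sC/(1+s), A/(1+s))$. By the classical gluing theorem for open covers of spaces, it remains only to show $\phi|_{V_T} : V_T \to U_T$ is a homotopy equivalence, which follows at once from $V_T$ and $U_T$ each being contractible.

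To finish, one exhibits explicit contractions. The isomorphism $V \simeq \bb{A}^2 \setminus \{cc' = -1\}$ via $(u, c, c') \mapsto (c, c')$ (with $u = -1/(1 + cc')$) realizes $V_T$ as the neighborhood $\{|cc'| < \epsilon\}$ of the nodal fiber, which the straight-line homotopy $(c, c') \mapsto ((1-\tau)c, (1-\tau)c')$ contracts to the origin. For $U_T$, whose unique singular point is the $A_2$ node at $(-1, 0, 0)$, I would use
\begin{align}
h_\tau(s, A, C) := \pa{(1-\tau)s - \tau,\; \tfrac{(1-\tau)^2 s}{\tilde s}\, A,\; \tfrac{(1-\tau) s}{\tilde s}\, C},\qquad \tilde s := (1-\tau)s - \tau.
\end{align}
The identity $\tilde s + 1 = (1-\tau)(s + 1)$ makes the preservation of the defining equation immediate; a short calculation shows that $|\tilde s + 1|/|\tilde s|$ is a monotone function of $1 - \tau$ whenever $\ur{Re}(s) < 0$ (which holds on $T$ for any $\epsilon < 1$), so $h_\tau$ stays in $U_T$; finally $h_0 = \id$ and $h_1 \equiv (-1, 0, 0)$. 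Finding this contraction is the main technical hurdle, because the natural action of $H_g = B_+ \cap B_-$ on $\cal{U}_g$ preserves both invariants $XZ$ and $AC$, so no one-parameter subgroup of it can collapse $U_T$ on its own; the flow above must mix a translation in the $s$-coordinate with a weighted rescaling of $(A, C)$.
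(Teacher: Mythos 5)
Your proof is correct, and it begins with the same reduction as the paper: both arguments factor out the auxiliary $\bb{G}_m$-coordinate and reduce to the map of threefolds which you call $\phi : V \to U$ (the paper calls these $\Phi_g^\dagger : \cal{V}_g^\dagger \to \cal{U}_g^\dagger$; the defining equations agree after clearing denominators), and both deduce non-injectivity and non-surjectivity from the collapse of the fiber over $u = -1$. Where you diverge is in the homotopy-equivalence step. The paper argues globally: the map $u$ realizes $V(\bb{C})$ and $U(\bb{C})$ as families of $\bb{G}_m$'s degenerating to a nodal curve at $u = -1$, so both are homotopy equivalent to a pinched torus, and $\phi$ induces a self-map of the pinched torus preserving the longitude and the top homology class, which is then asserted to be a homotopy equivalence. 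You instead localize: you cover $V$ and $U$ by the locus $u \neq -1$ (where $\phi$ is an algebraic isomorphism) and a small punctured-disk neighborhood of $u = -1$, show each of the two singular open pieces $V_T$ and $U_T$ is contractible via explicit (in the case of $U_T$, cleverly designed) deformation retractions, and invoke the gluing lemma for homotopy equivalences along open covers. Your route avoids the model-dependent claim about pinched tori and replaces it with verifiable formulas; the cost is the work of finding the contraction of $U_T$, which you correctly observe cannot come from the $H_g$-action since that preserves the two invariants $XZ$ and $AC$. Both arguments are sound; yours is the more self-contained of the two. One small point you leave implicit: along the homotopy $h_\tau$ one must also check $\tilde{s} \neq 0$, but this follows from the same inequality $\ur{Re}(s) < 0$ that you already use, since $\tilde{s} = (1-\tau)(s+1) - 1$ vanishing would force $\ur{Re}(s+1) = 1/(1-\tau) \geq 1$.
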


\begin{proof}
Let 
\begin{align}
\cal{U}_g^\dagger 
	&= \{(u, A, C) \in \bb{G}_m \times \bb{A}^2 \mid AC = -(1 + u)(1 + \tfrac{1}{u})^2\},\\
\cal{V}_g^\dagger 
	&= \{(u, c, c') \in \bb{G}_m \times \bb{A}^2  \mid cc' = -(1 + \tfrac{1}{u})\}.
\end{align}
Let $\Phi_g^\dagger : \cal{V}_g^\dagger \to \cal{U}_g^\dagger$ be the map $\Phi_g^\dagger(u, c, c') = (u, (1 + u)c', (1 + \frac{1}{u})c)$.
Then $\Phi_g$ is a pullback of $\Phi_g^\dagger$, so it suffices to show the claim of the proposition with $\cal{U}_g^\dagger, \cal{V}_g^\dagger, \Phi_g^\dagger$ in place of $\cal{U}_g, \cal{V}_g, \Phi_g$.

Observe that $\Phi_g^\dagger$ preserves $u \in \bb{G}_m$.
Over the subvariety of $\bb{G}_m$ where $u \neq -1$, the fibers of $\cal{U}_g^\dagger$ and $\cal{V}_g^\dagger$ are copies of $\bb{G}_m$: say, via the coordinates $A$ and $c'$.
In these coordinates, $\Phi_g^\dagger$ amounts to rotating $\bb{G}_m$ by $1 + u$.
Over the point $u = -1$, the fibers are copies of the transverse intersection of two lines.
Altogether, $\cal{U}_g^\dagger(\bb{C})$ and $\cal{V}_g^\dagger(\bb{C})$ are both homotopic to pinched tori, and $\Phi_g^\dagger$ induces a self-map of the pinched torus that preserves its longitude and top homology.
Thus $\Phi_g^\dagger$ fits into a homotopy equivalence.
It is neither injective nor surjective because $\Phi_g^\dagger(-1, c, c') = (-1, 0, 0)$.
\end{proof}

\subsection{The Group $\mathrm{Sp}_4$}

We set $\Sp_4 = \{g \in \GL_4 \mid g^tJg = J\}$, where
\begin{align}
J = \pmat{&&&1 \\ &&1 \\ &-1\\ -1}.
\end{align}
For $\Sp_4$, the only nontrivial coefficients of the characteristic polynomial are $\tr(g) = \tr(\Alt^3(g))$ and $\tr(\Alt^2(g))$, so similarly to $\SL_3$, we have:
\begin{align}
\cal{U} = \left\{g \in \Sp_4 \,\middle|
	\begin{array}{l}
	\tr(g) = 4,\\
	\tr(\Lambda^2(g)) = 4
	\end{array}
	\right\}
= \left\{g \in \Sp_4 \,\middle| 
	\begin{array}{l}
	\tr(g) = 4,\\
	\tr(g^2) = 4
	\end{array}
	\right\}.
\end{align}
The map $\Phi : U_+U_- \to \cal{U}$ is
\begin{align}
&\Phi\pa{\pmat{1&a&b+ad&c\\ &1&2d&b-ad\\ &&1&-a\\ &&&1}\pmat{1&&&\\ a'&1&&\\ b'+a'd'&2d'&1&\\ c'&b'-a'd'&-a'&1}}\\[1ex]
&=	\pmat{
	1 + f_1 + g_1
		&f_{1,2} + g_{1,2}
		&f_{1,3} + g_{1,3}
		&h_{1,4}\\
	f_{2,1} + g_{2,1}
		&1 - f_1 + g_2
		&h_{2,3}
		&f_{1,3} - g_{1,3}\\
	f_{3,1} + g_{3,1}
		&h_{3,2}
		&1 - f_1 - g_2
		&f_{1,2} - g_{1,2}\\
	h_{4,1}
		&f_{3,1} - g_{3,1}
		&f_{2,1} - g_{2,1}
		&1 + f_1 - g_1
},
\end{align}
where we set
\begin{align}
f_1	&= ba'd' + ada'd',\\
g_1	&= aa' + bb' + cc' + adb',\\
g_2 &= -aa' + bb' + 4dd' - abc' - 3adb' + a^2dc'
\end{align}
and
\begin{align}
f_{1,2} &= -(c + ab + a^2d)a'd',\\
g_{1,2} &= 2bd' + cb' - a(aa' + bb' + cc' - 2dd' + adb'),\\
f_{1,3} &= -ca' - b(aa' + bb' + cc' + 4dd') - 2cdb' + ad(aa' + cc' - 4dd' + adb'),\\
g_{1,3} &= -(b^2 - 2cd - a^2d^2)a'd',\\ 
f_{2,1} &= 2da'd',\\
g_{2,1} &= a' + bc' + 2db' - adc',\\
f_{3,1} &= b' - ac',\\
g_{3,1} &= a'd'
\end{align}
and
\begin{align}
h_{1,4} &= -c(2aa' + 2bb' + cc') - 2b^2d' - 2ad(cb' + 2bd' + add'),\\
h_{2,3} &= -2ba' + 2d(aa' - 2bb' - 4dd') - b^2c' + ad(2bc' + 4db' - adc'),\\
h_{3,2} &= 2d' - 2ab' + a^2c',\\
h_{4,1} &= c'.
\end{align}
We can write $W = \{e, s, t, ts, st, sts, tst, w_0\}$, where $s$ and $t$ are the simple reflections.
By the $\SL_2$ case and \Cref{lem:der,lem:translate,lem:levi}, it remains to consider $gB_+ = \dot{w}B_+$ for $w \in \{ts, st, sts, tst, w_0\}$.

Below, we will only check $w = sts$.
Without loss of generality, we can assume
\begin{align}
\dot{sts}B_+ 
	= \left\{\pmat{ 
		& & &\frac{1}{X} \\ 
		&Y &2YD &Y(B - AD) \\ & &\frac{1}{Y} &-\frac{A}{Y} \\ -X &-XA &-X(B + AD) &C}
			\,\middle|\,
		X, Y \neq 0
		\right\}.
\end{align}
The varieties $\cal{U}_g$ and $\cal{V}_g$ are
\begin{align}
\cal{U}_g
	&= \small\left\{\pmat{
 		& & &\frac{1}{X}\\
 		&Y &2YD &Y(B - AD)\\
 		& &\frac{1}{Y} &-\frac{A}{Y}\\
 		-X &-XA &-X(B + AD) &4 - Y - \frac{1}{Y}
	}\,\middle|
		\begin{array}{l}
			X, Y \neq 0,\\
			XA(Y(B - AD) - \frac{1}{Y}(B + AD))\\
			{} = \frac{1}{Y^2}(1 - Y)^4
		\end{array}
\right\},\\[1ex]
\cal{V}_g
	&= \small\left\{\pmat{
		{}
			&{}
			&{}
			&c\\[1ex]
		{}
			&\tfrac{1}{1 + aa'}
			&2(1 + aa')d - c(a')^2
			&ca' - 2ad\\[1ex]
		{}
			&{}
			&1 + aa'
			&-a\\[1ex]
		-\frac{1}{c}
			&-\frac{a}{c(1 + aa')}
			&-a'
			&1
	}\,\middle|
	\begin{array}{l}
	c, 1 + aa' \neq 0
	\end{array}
	\right\}.
\end{align}
The coordinates define isomorphisms
\begin{align}
\cal{U}_g
	&\xrightarrow{\sim} 
	\left\{(X, Y, A, B, D) \in \bb{G}_m^2 \times \bb{A}^3 \,\middle|
		\begin{array}{l}
		XA(Y(B - AD) - \tfrac{1}{Y}(B + AD))\\
		{} = \tfrac{1}{Y^2}(1 - Y)^4
		\end{array}
	\right\},\\[1ex]
\cal{V}_g
	&\xrightarrow{\sim} 
	\{(c, a, d, a') \in \bb{G}_m \times \bb{A}^3 \mid 1 + aa' \neq 0\}.
\end{align}
The map
\begin{align}
\Phi_g(x_+x_-)
&=	\pmat{
		{}
			&{} 
			&{}
			&c\\[1ex]
		{}
			&\frac{1}{1 + aa'}
			&2ada'(\frac{2 + aa'}{1 + aa'}) - c(a')^2
			&2a^2 da' - \frac{a^2c(a')^3}{1 + aa'}\\[1ex]
		{} 
			&{} 
			&1 + aa'
			&a^2a'\\[1ex]
		-\frac{1}{c} 
			&\frac{a^2a'}{c(1 + aa')}
			&-\frac{2a^2da'}{c(1 + aa')}
			&3 - aa' - \frac{1}{1 + aa'}
}
\end{align}
corresponds to setting
\begin{align}
\pmat{X \\ Y \\ A \\ B \\ D}
&= \pmat{
		\tfrac{1}{c}\\[1ex]
		\tfrac{1}{1 + aa'}\\[1ex]
		-\tfrac{a^2a'}{1 + aa'}\\[1ex]
		a^2da'(1 + aa' + \tfrac{1}{1 + aa'}) - \frac{1}{2}a^2 c (a')^3\\[1ex]
		ada'(2 + aa') - \tfrac{1}{2} c(a')^2(1 + aa')
	}.
\end{align}

\begin{prop}
For $G = \Sp_4$ and $gB_+ = \dot{sts}B_+$, the map $\Phi_g$ is neither injective nor surjective on $\bb{C}$-points, but does define half of a homotopy equivalence.
\end{prop}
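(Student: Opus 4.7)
My approach mirrors the $\SL_3$, $w = w_0$ case: reduce to lower-dimensional ``dagger'' models by collapsing contractible factors, exhibit $\Phi_g$ as a pullback of a reduced map $\Phi_g^\dagger$, and analyze $\Phi_g^\dagger$ directly. On the $\cal{V}_g$ side, $c \in \bb{G}_m$ and $d \in \bb{A}^1$ appear as free parameters, so the projection $(c, a, d, a') \mapsto (a, a')$ is a trivial $\bb{G}_m \times \bb{A}^1$-bundle over
\begin{align}
\cal{V}_g^\dagger := \{(a, a') \in \bb{A}^2 : 1 + aa' \neq 0\}.
\end{align}
On the $\cal{U}_g$ side, the defining equation is linear in $(B, D)$ away from $\{A = 0\}$, so the fiber in $(B, D)$ over such an $(X, Y, A)$ is an $\bb{A}^1$; it jumps to $\bb{A}^2$ at $(Y, A) = (1, 0)$ and is empty over $\{A = 0, Y \neq 1\}$. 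After matching $X = 1/c$, the natural reduced model is
\begin{align}
\cal{U}_g^\dagger := \{(Y, A) \in \bb{G}_m \times \bb{A}^1 : A \neq 0 \text{ or } Y = 1\},
\end{align}
and the induced reduced map is $\Phi_g^\dagger(a, a') = (1/(1 + aa'),\ -a^2 a'/(1+aa'))$.

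Next I would analyze $\Phi_g^\dagger$ using the common base $\bb{G}_m$ given on the source by $(a, a') \mapsto 1 + aa'$ and on the target by $(Y, A) \mapsto 1/Y$, with respect to which $\Phi_g^\dagger$ is a morphism of fibrations. Over $1 + aa' = c \neq 1$, both source and target fibers are copies of $\bb{G}_m$, and $\Phi_g^\dagger$ restricts to multiplication by $(1-c)/c$, an isomorphism. Over $c = 1$, the source fiber is the ``axis cross'' $\{aa' = 0\} = \{a = 0\} \cup \{a' = 0\}$ (contractible), the target fiber is $\{1\} \times \bb{A}^1$ (also contractible), and $\Phi_g^\dagger$ collapses the source fiber onto the single point $(1, 0)$. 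Since the singular source fiber is a closed contractible subvariety, collapsing it is a homotopy equivalence onto the image; combined with a deformation retraction of $\cal{U}_g^\dagger$ onto that image (pulling $\{1\} \times \bb{A}^1$ onto $(1,0)$ via a smooth cutoff near $Y = 1$), this shows $\Phi_g^\dagger$ is a homotopy equivalence.

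Non-injectivity and non-surjectivity of $\Phi_g$ are visible in the critical fiber: the formula $\Phi_g(c, 0, d, a') = (1/c, 1, 0, 0, -c(a')^2/2)$ has $B$-component identically zero, so points of $\cal{U}_g$ with $Y = 1$, $A = 0$, and $B \neq 0$ are not in the image, and the preimage of $(1/c_0, 1, 0, 0, D_0)$ with $D_0 \neq 0$ contains the two-parameter family $\{(c_0, 0, d, \pm\sqrt{-2D_0/c_0}) : d \in \bb{A}^1\}$. The main technical obstacle is the reduction $\cal{U}_g \to \cal{U}_g^\dagger$: because the fiber jumps in dimension at the critical point, verifying that $\Phi_g$ really is a pullback of $\Phi_g^\dagger$ up to homotopy requires a careful local retraction argument, exactly as in the $\SL_3$, $w_0$ case.
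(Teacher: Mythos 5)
Your overall strategy mirrors the paper's (reduce to a ``dagger'' model and factor $\Phi_g$ through it), but your specific reduction diverges from the paper's in a way that creates a genuine gap.  The paper keeps the dimension: it sets $\Psi_{\mathcal{U}}(X,Y,A,B,D) = (X,Y,A,B-AD,B+AD)$, mapping $\mathcal{U}_g$ to a $5$-dimensional variety $\mathcal{U}_g^\dagger \subseteq \mathbf{G}_m^2 \times \mathbf{A}^3$, and $\Psi_{\mathcal{V}}$ analogously, so that $\Phi_g^\dagger$ on the dagger level is a genuine \emph{algebraic isomorphism}; the work is all in showing $\Psi_{\mathcal{U}}, \Psi_{\mathcal{V}}$ are homotopy equivalences, and each of these is an isomorphism away from $A=0$ (resp.\ $aa'=0$) and has fibers of constant dimension.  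You instead \emph{project away} $(B,D)$, which produces fibers that jump from $\mathbf{A}^1$ to $\mathbf{A}^2$, and whose image $\{(Y,A): A\neq 0 \text{ or } Y=1\}$ is a constructible set that is \emph{not} locally closed, hence not a variety.  You acknowledge this jump but then defer to ``exactly as in the $\mathrm{SL}_3$, $w_0$ case'' --- but in the $\mathrm{SL}_3$ case the reduction $\mathcal{U}_g \to \mathcal{U}_g^\dagger$ is a genuine algebraic pullback along the trivial $\mathbf{G}_m$-bundle $(X,Z)\mapsto XZ$, with no jumping fibers and no local retraction needed, so that precedent does not cover your situation.

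There is a second soft spot in your analysis of $\Phi_g^\dagger$ itself.  The image of your $\Phi_g^\dagger$ is $\{(Y,A): A \neq 0\} \cup \{(1,0)\}$, which is neither open nor closed in $\mathcal{U}_g^\dagger$ (its complement $\{(1,A): A\neq 0\}$ is not open, since nearby points with $Y\neq 1$ are in the image).  Building a deformation retraction of $\mathcal{U}_g^\dagger$ onto a subset that is neither open nor closed ``via a smooth cutoff near $Y=1$'' is exactly the kind of step that needs to be exhibited explicitly rather than asserted; this is precisely the problem the paper sidesteps by arranging for $\Phi_g^\dagger$ to be an isomorphism of varieties.  Your non-injectivity and non-surjectivity computations are correct (and consistent with the paper's), and your fiberwise analysis of $\Phi_g^\dagger$ over $c\neq 1$ is right --- multiplication by $(1-c)/c$ on $\mathbf{G}_m$ --- so the picture you have is qualitatively accurate.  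But to close the argument you would need either (i) a direct proof that collapsing affine-space fibers of varying dimension gives a homotopy equivalence in your specific situation, or (ii) to switch to the paper's dimension-preserving change of coordinates $(B,D)\mapsto (B-AD, B+AD)$, which keeps everything inside the category of varieties and makes the bottom map an isomorphism.
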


\begin{proof}
The map $\Phi_g$ fits into a commutative diagram
\begin{equation}
\begin{tikzpicture}[baseline=(current bounding box.center), >=stealth]
\matrix(m)[matrix of math nodes, row sep=2.5em, column sep=2.5em, text height=2ex, text depth=0.5ex]
{ 		
	\cal{V}_g
		&\cal{U}_g\\	
	\cal{V}_g^\dagger
		&\cal{U}_g^\dagger\\
};
\path[->, font=\scriptsize, auto]
(m-1-1)
	edge node{$\Phi_g$} (m-1-2)
	edge node[left]{$\Psi_\cal{V}$} (m-2-1)
(m-1-2)
	edge node{$\Psi_\cal{U}$} (m-2-2)
(m-2-1)
	edge node{$\Phi_g^\dagger$} (m-2-2);
\end{tikzpicture}
\end{equation}
where the new varieties are
\begin{align}
\cal{U}_g^\dagger
	&= \{(X, Y, A, B_-, B_+) \in \bb{G}_m^2 \times \bb{A}^3
			\mid
				XA(YB_- - \tfrac{B_+}{Y}) 
				= \tfrac{1}{Y^2}(1 - Y)^4
		\},\\
\cal{V}_g^\dagger
	&= \{(c, u, a_1, a_2, d') \in \bb{G}_m^2 \times \bb{A}^3
			\mid
				(u - 1)^4 = a_1a_2
		\},
\end{align}
and the new maps are
\begin{align}
\Phi_g^\dagger(c, u, a_1, a_2, d')
	&= (\tfrac{1}{c},\quad \tfrac{1}{u},\quad -\tfrac{a_1}{u},\quad 2ud' - ca_2,\quad \tfrac{2d'}{u}),\\
\Psi_\cal{U}(X, Y, A, B, D)
	&= (X,\quad Y,\quad A,\quad B - AD,\quad B + AD),\\
\Psi_\cal{V}(c, a, d, a')
	&= (c,\quad 1 + aa',\quad a^2a',\quad a^2(a')^3, \quad a^2da').
\end{align}
The map $\Phi_g^\dagger$ is algebraically invertible, so to show that $\Phi_g$ induces a homotopy equivalence, it remains to study the topology of the maps $\Psi_\cal{U}$ and $\Psi_\cal{V}$.

To show that $\Psi_\cal{U}$ induces a homotopy equivalence, we first note that it preserves $(X, A) \in \bb{A}^2$.
Over the subvariety of $\bb{A}^2$ where $A \neq 0$, it is invertible.
Over the line $A = 0$, the defining equations of $\cal{U}_g$ and $\cal{U}_g^\dagger$ both simplify to $(1 - Y)^4 = 0$, which has the unique solution $Y = 1$ over $\bb{C}$, so over this line, the fibers of $\cal{U}_g(\bb{C})$ and $\cal{U}_g^\dagger(\bb{C})$ are contractible, being copies of $\bb{C}^2$.

To show that $\Psi_\cal{V}$ induces a homotopy equivalence, it suffices to show the same for the map from $\{(a, a') \in \bb{A}^2 \mid aa' \neq -1\}$ into $\{(u, a_1, a_2) \in \bb{G}_m \times \bb{A}^2 \mid (u - 1)^4 = a_1a_2\}$ that sends $(a, a') \mapsto (1 + aa', a^2a', a^2(a')^3)$.
Indeed, this map restricts to an isomorphism from the subvariety where $aa' \neq 0$ onto the subvariety where $a_1a_2 \neq 0$, and collapses the subvariety where $aa' = 0$ onto the point $(u, a_1, a_2) = (1, 0, 0)$.
The set of $\bb{C}$-points in the domain where $aa' = 0$ is contractible, and the set of $\bb{C}$-points in the target where $a_1a_2 = 0$ admits a deformation retract onto $\{(1, 0, 0)\}$.

Therefore, $\Phi_g$ induces a a homotopy equivalence on $\bb{C}$-points.
It is not injective because $\Phi_g(c, a, d, 0) = (\tfrac{1}{c}, 1, 0, 0, 0) = \Phi_g(c, 0, d, a')$ for all $(c, d, a, a')$ such that $aa' \neq -1$, and it is not surjective because the points of the form $(X, 1, 0, B, D) \in \cal{V}_g(\bb{C})$ with $B \neq 0$ do not appear in the image.
\end{proof}

\section{Configurations of Flags}\label{sec:flag}

\subsection{}

In this section, we relate $\cal{U}_g$ and $\cal{V}_g$ to varieties that were studied in \cite{trinh}.
Henceforth, we fix any field $\bb{F}$ of good characteristic for $G$, and replace $G$ with its split form over $\bb{F}$.
We also assume that $B_+$ is defined over $\bb{F}$.

Let $\cal{B}$ be the flag variety of $G$, \emph{i.e.}, the variety that parametrizes its Borel subgroups.
As these subgroups are all self-normalizing and conjugate to one another, there is an isomorphism of varieties:
\begin{align}\label{eq:coset}
\begin{array}{rcl}
G/B_+ &\xrightarrow{\sim} &\cal{B}\\
xB_+ &\mapsto &xB_+x^{-1}
\end{array}
\end{align}
It transports the $G$-action on $G/B_+$ by left multiplication to the $G$-action on $\cal{B}$ by left conjugation.

The orbits of the diagonal $G$-action on $\cal{B} \times \cal{B}$ can be indexed by the Weyl group $W$.
The closure order on the orbits corresponds to the Bruhat order on $W$ induced by the Coxeter presentation.
For all $(B_1, B_2) \in \cal{B} \times \cal{B}$ and $w \in W$, we write $B_1 \xrightarrow{w} B_2$ to indicate that $(B_1, B_2)$ belongs to the $w$th orbit, in which case we say that it is in \emph{relative position} $w$.
In particular, note that $B_+ \xrightarrow{w_0} B_-$ because $B_- = \dot{w}_0 B_+ \dot{w}_0^{-1}$.

Under the Bruhat decomposition, 
\eqref{eq:coset} restricts to an isomorphism
\begin{align}\label{eq:bruhat}
U_+\dot{w}B_+/B_+ \xrightarrow{\sim} \{B \in \cal{B} \mid B_+ \xrightarrow{w} B\}.
\end{align}
Each side is isomorphic to an affine space of dimension $\ell(w)$, where $\ell : W \to \bb{Z}_{\geq 0}$ is the Bruhat length function.

\subsection{}

Fix $g \in G(\bb{F})$.
Recall that $H_g = B_+ \cap gB_+g^{-1}$ acts on $\cal{V}_g = U_+U_- \cap gB_+$ according to \eqref{eq:action}.
Let
\begin{align}
\cal{X}_g = \{B \in \cal{B} : B_+ \xrightarrow{w_0} B \xrightarrow{w_0} gB_+g^{-1}\},
\end{align}
and let $H_g$ act on $\cal{X}_g$ by left conjugation.
We will prove:

\begin{prop}\label{prop:v-to-x}
There is a $H_g$-equivariant isomorphism of varieties $\cal{X}_g \to \cal{V}_g$.
\end{prop}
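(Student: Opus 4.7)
The plan is to identify both $\cal{X}_g$ and $\cal{V}_g$ with the same open subvariety of $U_+$, then verify that the induced map intertwines the two $H_g$-actions by a direct computation.

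\textbf{Step 1.} By \eqref{eq:bruhat} with $w = w_0$, the map $x_+ \mapsto x_+ B_- x_+^{-1}$ identifies $U_+$ with the open cell $\{B \in \cal{B} : B_+ \xrightarrow{w_0} B\}$. The further condition $B \xrightarrow{w_0} gB_+g^{-1}$, i.e.\ opposition of $B$ and $gB_+g^{-1}$, is equivalent to saying that $g^{-1}Bg = (g^{-1}x_+)B_-(g^{-1}x_+)^{-1}$ is opposite to $B_+$, which by the standard Bruhat criterion happens iff $g^{-1}x_+$ lies in the big cell $B_+U_- = U_+B_-$. Thus $\cal{X}_g$ identifies with the open subvariety
\[
\cal{W}_g := \{x_+ \in U_+ : g^{-1}x_+ \in B_+ \cdot U_-\} \subseteq U_+.
\]

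\textbf{Step 2.} The variety $\cal{V}_g$ admits the same description. For $y = x_+x_- \in \cal{V}_g$, the relation $g^{-1}x_+ = (g^{-1}y)\,x_-^{-1}$ shows $g^{-1}x_+ \in B_+ U_-$, so projection $y \mapsto x_+$ lands in $\cal{W}_g$. Inverting: given $x_+ \in \cal{W}_g$, invoke the isomorphism $B_+ \times U_- \xrightarrow{\sim} B_+U_-$ to write $g^{-1}x_+ = b u$ uniquely with $b \in B_+$, $u \in U_-$, and set $y := x_+u^{-1}$. Then $y \in U_+U_-$ and $g^{-1}y = b \in B_+$, so $y \in \cal{V}_g$. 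Both directions are algebraic, so composing identifications yields
\[
\phi : \cal{X}_g \xrightarrow{\sim} \cal{V}_g, \qquad x_+ B_- x_+^{-1} \longmapsto x_+ u^{-1}.
\]

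\textbf{Step 3.} For equivariance, fix $h = tv \in H_g$ with $t \in T$, $v \in U_+$. On $\cal{X}_g$, since $hx_+ = (tvx_+ t^{-1}) \cdot t$ and $t \in T$ normalizes $B_-$, conjugation by $h$ sends the parameter $x_+$ to $x_+' := tvx_+t^{-1}$. On $\cal{V}_g$, the action \eqref{eq:action} gives $h \cdot (x_+u^{-1}) = x_+' \cdot (tut^{-1})^{-1}$. It suffices to check that the big-cell factorization of $g^{-1}x_+'$ has $U_-$-part $tut^{-1}$. Writing $b_h := g^{-1}hg \in B_+$ (by definition of $H_g$), so that $g^{-1}h = b_h g^{-1}$, one computes
\[
g^{-1}x_+' = b_h (g^{-1}x_+)\, t^{-1} = b_h \cdot b u \cdot t^{-1} = (b_h b t^{-1}) \cdot (tut^{-1}),
\]
which is the desired factorization in $B_+ \cdot U_-$.

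I do not expect a substantive obstacle: once one has the correct candidate for $\phi$, everything reduces to careful bookkeeping inside the big cell, and algebraicity follows from the regularity of the decomposition $B_+U_- \simeq B_+ \times U_-$. The only mildly delicate point is the interaction of the twist in \eqref{eq:action} with the factorization, which is resolved by the identity $g^{-1}h = b_h g^{-1}$ used above.
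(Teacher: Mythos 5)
Your proof is correct. Let me compare it to the paper's argument, which also proceeds via the big Bruhat cell but organizes the intermediaries differently: the paper sets $\cal{Y}_g = gU_+\dot{w}_0B_+/B_+ \subseteq G/B_+$ and breaks the isomorphism into two lemmas, $\cal{V}_g \xrightarrow{\sim} \cal{Y}_1 \cap \cal{Y}_g$ (Lemma~\ref{lem:v-to-y}) and $\cal{Y}_1 \cap \cal{Y}_g \xrightarrow{\sim} \cal{X}_g$. Unwinding the definitions, $\cal{Y}_1 \cap \cal{Y}_g$ is precisely your $\cal{W}_g$ re-embedded in $G/B_+$, so the intermediate object is the same up to identification. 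What differs is the verification: the paper proves bijectivity in Lemma~\ref{lem:v-to-y} by a nonconstructive argument on $R$-points (passing through yet another intermediate $\cal{V}'_g = U_+\dot{w}_0B_+ \cap gU_+\dot{w}_0$), whereas you produce an explicit two-sided inverse via the big-cell factorization $B_+ \times U_- \xrightarrow{\sim} B_+U_-$, and then verify $H_g$-equivariance by the direct computation $g^{-1}x_+' = (b_h b t^{-1})(tut^{-1})$. Your version is arguably more usable: it yields a closed formula for the isomorphism and makes the equivariance transparent, which is worth noting given that the paper's remark after \Cref{thm:cohomology-to-hhh} laments the ``inexplicit nature of Lemma~\ref{lem:v-to-y}'' as an obstacle to generalization. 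A minor stylistic point: it would be cleanest to record explicitly that $\cal{W}_g$ is open in $U_+$ (preimage of the open big cell $B_+U_-$ under $x_+ \mapsto g^{-1}x_+$), ensuring that $\phi$ is a morphism of varieties and not merely a bijection on points; but this is immediate from the algebraicity of the factorization, as you say.
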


We give the proof in two steps.
For convenience, we set $\cal{Y}_g = gU_+\dot{w}_0B_+/B_+ \subseteq G/B_+$.
Let $H_g$ act on $\cal{Y}_1 \cap \cal{Y}_g$ by left multiplication.

\begin{lem}
The isomorphism \eqref{eq:bruhat} for $w = w_0$ restricts to an $H_g$-equivariant isomorphism $\cal{Y}_1 \cap \cal{Y}_g \xrightarrow{\sim} \cal{X}_g$.
\end{lem}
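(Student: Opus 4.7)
The plan is as follows. Since \eqref{eq:bruhat} is already an isomorphism of varieties for $w = w_0$, and since both $\cal{Y}_1 \cap \cal{Y}_g$ and $\cal{X}_g$ are open subvarieties of their respective ambient varieties, it suffices to check at the level of closed points that $xB_+ \mapsto xB_+x^{-1}$ carries $\cal{Y}_1 \cap \cal{Y}_g$ bijectively onto $\cal{X}_g$, and that the restricted map intertwines the two $H_g$-actions. The restriction is then automatically an isomorphism of varieties.

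To identify the loci, I will rewrite both defining conditions on $xB_+ \in \cal{Y}_1$ as Bruhat-cell memberships in $G$. By construction, $xB_+ \in \cal{Y}_g$ means $g^{-1}x \in U_+\dot{w}_0 B_+$, whereas unwinding the definition of relative position shows that $xB_+x^{-1} \xrightarrow{w_0} gB_+g^{-1}$ means $x^{-1}g \in B_+\dot{w}_0 B_+$. The key observation reconciling these two conditions is that $B_+\dot{w}_0 B_+ = U_+\dot{w}_0 B_+$ (using $B_+ = TU_+$ together with the fact that $T = \dot{w}_0 T \dot{w}_0^{-1}$), and that this open Bruhat cell is closed under inversion, since $(B_+\dot{w}_0 B_+)^{-1} = B_+\dot{w}_0^{-1} B_+$ sits in the Bruhat stratum of $w_0^{-1} = w_0$. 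Combining these two facts, $x^{-1}g \in B_+\dot{w}_0 B_+$ if and only if $g^{-1}x \in U_+\dot{w}_0 B_+$, which gives the desired bijection.

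For equivariance, I will first check that left multiplication by $H_g = B_+ \cap gB_+g^{-1}$ preserves $\cal{Y}_1 \cap \cal{Y}_g$. If $h \in B_+$ and $xB_+ \in \cal{Y}_1$, then $hxB_+ \in B_+ \cdot U_+\dot{w}_0 B_+/B_+ = U_+\dot{w}_0 B_+/B_+ = \cal{Y}_1$ by the equality above; if additionally $h \in gB_+g^{-1}$ and $xB_+ \in \cal{Y}_g$, then writing $h = gb_0 g^{-1}$ for some $b_0 \in B_+$ gives $hxB_+ \in g \cdot B_+ \cdot U_+\dot{w}_0 B_+/B_+ = \cal{Y}_g$ by the parallel calculation. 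Equivariance itself is then immediate from the formula for the map: the image of $hxB_+$ is $hxB_+x^{-1}h^{-1}$, which is precisely the left conjugate of $xB_+x^{-1}$ by $h$.

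This is essentially a bookkeeping consequence of the Bruhat decomposition, and I do not foresee any real obstacle. The one point requiring a sentence of care is the self-inverse property of the open Bruhat cell, which is what forces the a priori asymmetric conditions cutting out $\cal{Y}_g$ on the one side and the relative-position requirement defining $\cal{X}_g$ on the other to coincide.
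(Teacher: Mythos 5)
Your argument is correct and is, in substance, the same as the paper's: the crux in both cases is that $w_0^{-1} = w_0$, which makes the big cell $B_+\dot{w}_0 B_+ = U_+\dot{w}_0 B_+$ stable under inversion and lets the a priori one-sided condition cutting out $\mathcal{Y}_g$ match the relative-position requirement $B \xrightarrow{w_0} gB_+g^{-1}$ in $\mathcal{X}_g$. The only difference is stylistic: the paper leverages the $G$-equivariance of \eqref{eq:coset} to transport the $w=w_0$ case of \eqref{eq:bruhat} by $g$ and then takes a fiber product of the $x=1$ and $x=g$ instances over $\mathcal{B}$, whereas you unwind the same comparison explicitly in terms of double-coset membership in $G$; your version has to record separately that $H_g$ preserves $\mathcal{Y}_1 \cap \mathcal{Y}_g$, which the fiber-product formulation absorbs automatically.
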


\begin{proof}
Recall that \eqref{eq:bruhat} is $G$-equivariant.
Under the action of an element $x$, the $w = w_0$ case is transported to an isomorphism
\begin{align}
\cal{Y}_x \xrightarrow{\sim} \{B \in \cal{B} \mid xB_+x^{-1} \xrightarrow{w_0} B\}.
\end{align}
On the right-hand side, the direction of the arrow $\xrightarrow{w_0}$ can be reversed because $w_0^{-1} = w_0$.
Now take the fiber product of the isomorphisms for $x = 1$ and $x = g$ over the isomorphism \eqref{eq:coset}.
\end{proof}

In what follows, recall that via the decomposition $B_+ \simeq U_+ \rtimes T$, any element of $B_+$ can be written as $ut$ for some uniquely determined $(u, t) \in U_+ \times T$.
As a consequence, we also get a decomposition $\dot{w}_0 B_+ = U_-\dot{w}_0T = U_-T\dot{w}_0$.

\begin{lem}\label{lem:v-to-y}
The map
\begin{align}
\begin{array}{rcl}
\cal{V}_g = U_+U_- \cap gB_+ &\to &\cal{Y}_1 \cap \cal{Y}_g\\
x_+x_- = gut &\mapsto &x_+x_-t^{-1}\dot{w}_0B_+ = gu\dot{w}_0B_+
\end{array}
\end{align}
is an $H_g$-equivariant isomorphism of varieties.
\end{lem}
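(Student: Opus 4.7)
Write $x_+x_- = gut \in gB_+$ with $(u,t) \in U_+ \times T$, so that $g^{-1}x_+x_- = ut$ is the unique decomposition coming from $B_+ = U_+ \rtimes T$; this projection is algebraic in $x_+x_-$. The equality $x_+x_-t^{-1}\dot{w}_0 B_+ = gu\dot{w}_0 B_+$ in the statement is then immediate from $x_+x_- = gut$. Moreover, using
\[ x_+x_-t^{-1}\dot{w}_0 = x_+\dot{w}_0 \cdot (\dot{w}_0^{-1}x_-t^{-1}\dot{w}_0) \]
together with the facts that $\dot{w}_0^{-1}x_-\dot{w}_0 \in U_+$ (because $\dot{w}_0$ swaps $U_\pm$) and $\dot{w}_0^{-1}t^{-1}\dot{w}_0 \in T$ (because $\dot{w}_0$ normalizes $T$), the map admits the equivalent simple description $x_+x_- \mapsto x_+\dot{w}_0 B_+$. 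The image lies in $\cal{Y}_g$ by the original description and in $\cal{Y}_1$ by the second one, since $x_+ \in U_+$.

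For the inverse, fix $P \in \cal{Y}_1 \cap \cal{Y}_g$. The $w = w_0$ case of \eqref{eq:bruhat} extracts unique $u_1, u_2 \in U_+$, algebraic in $P$, with $P = u_1\dot{w}_0 B_+ = gu_2\dot{w}_0 B_+$. The coset equality forces $u_1^{-1}gu_2 \in \dot{w}_0 B_+\dot{w}_0^{-1} = B_-$, and the decomposition $B_- = U_- \rtimes T$ yields unique $v_- \in U_-$ and $t \in T$ with $u_1^{-1}gu_2 = v_-t$, again algebraically. Setting $x_+ = u_1$ and $x_- = v_-$, we find $x_+x_- = u_1v_- = gu_2t^{-1} \in gB_+$ and $x_+x_- \in U_+U_-$ by construction, so $x_+x_- \in \cal{V}_g$. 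The two maps are mutually inverse by direct unwinding: forward-then-backward recovers $(x_+, x_-)$ from the identity $x_+^{-1}gu = x_-t^{-1}$, while backward-then-forward recovers $P$ since $x_+\dot{w}_0 B_+ = u_1\dot{w}_0 B_+ = P$.

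For $H_g$-equivariance, let $h = tu$ with $(t,u) \in T \times U_+$. The action \eqref{eq:action} gives $h\cdot(x_+x_-) = (tux_+t^{-1})(tx_-t^{-1})$, and the simple description of the forward map sends this to $(tux_+t^{-1})\dot{w}_0 B_+$. Using once more that $\dot{w}_0^{-1}t^{-1}\dot{w}_0 \in T \subseteq B_+$, this equals $tux_+\dot{w}_0 B_+ = h\cdot(x_+\dot{w}_0 B_+)$, matching left multiplication on $\cal{Y}_1 \cap \cal{Y}_g$. The main obstacle is not conceptual but organizational: one must simultaneously track three algebraic decompositions ($B_+ = U_+ \rtimes T$, $B_- = U_- \rtimes T$, and the isomorphism $U_+ \xrightarrow{\sim} U_+\dot{w}_0 B_+/B_+$) and verify their compatibility with one another and with the $H_g$-actions on both sides.
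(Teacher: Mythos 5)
Your proof is correct, and it reaches the same conclusion by a somewhat more constructive route than the paper. The paper factors the map through the auxiliary variety $\cal{V}'_g = U_+\dot{w}_0B_+ \cap gU_+\dot{w}_0$ and then shows, working with $R$-points, that $f : \cal{V}'_g \to (\cal{V}'_gB_+)/B_+ = \cal{Y}_1 \cap \cal{Y}_g$ is bijective: surjectivity is elementary, while injectivity is reduced to the statement that $U_-$ meets any $B_+$-coset in at most one point (the big cell). You instead observe the cleaner description $x_+x_- \mapsto x_+\dot{w}_0 B_+$ (by absorbing $\dot{w}_0^{-1}x_-t^{-1}\dot{w}_0 \in B_+$), write down an explicit inverse by extracting $u_1, u_2 \in U_+$ from $P = u_1\dot{w}_0B_+ = gu_2\dot{w}_0B_+$ and decomposing $u_1^{-1}gu_2 \in B_- = U_- \rtimes T$, and then verify the two composites directly. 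The underlying geometric inputs are the same in both arguments (the Bruhat cell isomorphism $U_+ \xrightarrow{\sim} U_+\dot{w}_0B_+/B_+$ and the semidirect product structure of a Borel), so the gain from your version is transparency rather than economy: the inverse is written down rather than inferred, which makes the algebraicity of both directions manifest. You also check $H_g$-equivariance explicitly, using the simplified description $x_+x_- \mapsto x_+\dot{w}_0B_+$ together with $\dot{w}_0^{-1}t^{-1}\dot{w}_0 \in T \subseteq B_+$; the paper's proof omits this verification and addresses only the isomorphism of varieties, so your proof is a little more complete on this point.

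One small stylistic caveat: where the paper proves bijectivity on $R$-points for every $\bb{F}$-algebra $R$ — the scheme-theoretic standard for an isomorphism — you instead appeal to the various decompositions being ``algebraic.'' That appeal is sound here, since $B_+ \to U_+ \times T$, $B_- \to U_- \times T$, and the Bruhat cell parametrization are all genuine morphisms of schemes, but it is worth spelling out that this is what makes your explicit inverse a morphism, not just a set-theoretic map.
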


\begin{proof}
Let $\cal{V}'_g = U_+\dot{w}_0B_+ \cap gU_+\dot{w}_0$.
Since the map
\begin{align}
\begin{array}{rcl}
\cal{V}_g &\to &\cal{V}'_g\\
x_+x_- = gut &\mapsto &x_+x_-t^{-1}\dot{w}_0 = gu\dot{w}_0
\end{array}
\end{align}
is an isomorphism, it remains to show that the map $f : \cal{V}'_g \to \cal{Y}_1 \cap \cal{Y}_g$ given by
\begin{align}
\cal{V}'_g \to \cal{V}'_g B_+ \to (\cal{V}'_gB_+)/B_+ = \cal{Y}_1 \cap \cal{Y}_g
\end{align}
is bijective on $R$-points for every $\bb{F}$-algebra $R$.
For convenience, we suppress $R$ in the notation below.

Let $yB_+ \in (\cal{V}'_gB_+)/B_+$.
Then we can write $y = u\dot{w}_0b = gu'\dot{w}_0b'$ for some $u, u' \in U_+$ and $b, b' \in B_+$.
Therefore, $yB_+ = f(y(b')^{-1})$, where $y(b')^{-1} = gu'\dot{w}_0 \in \cal{V}'_g$.
This proves $f^{-1}(yB_+)$ is nonempty.
We claim that $f^{-1}(yB_+)$ contains only one point.
Recall that the map $U_+ \to U_+\dot{w}_0B_+/B_+$ that sends $v \mapsto v\dot{w}_0B_+$ is an isomorphism.
Thus, $v \neq u$ implies $v\dot{w}_0B_+ \neq u\dot{w}_0B_+$.
We deduce that
\begin{align}
f^{-1}(yB_+) 
&= f^{-1}(u\dot{w}_0B_+)\\
&\subseteq u\dot{w}_0B_+ \cap gU_+\dot{w}_0\\ 
&\simeq (\dot{w}_0^{-1} g^{-1} u\dot{w}_0)B_+ \cap U_-.
\end{align}
But the intersection of $U_-$ with any coset of $B_+$ contains only one point.
\end{proof}

\subsection{}

Let $\cal{O}_w, \cal{U}_w, \cal{X}_w$ be the varieties defined by
\begin{align}
\cal{O}_w &= \{(B', B'') \in \cal{B} \times \cal{B} \mid B' \xrightarrow{w} B''\},\\
\cal{U}_w &= \{(u, B') \in \cal{U} \times \cal{B} \mid B' \xrightarrow{w} uB'u^{-1}\},\\
\cal{X}_w &= \{(B, B', B'') \in \cal{B} \times \cal{B} \times \cal{B} \mid B' \xrightarrow{w_0} B \xrightarrow{w_0} B'' \xleftarrow{w} B'\}.
\end{align}
Let $G$ act on these varieties by (diagonal) left conjugation.
We regard $\cal{U}_w$ and $\cal{X}_w$ as varieties over $\cal{O}_w$ via the $G$-equivariant maps $(u, B') \mapsto (B', uB'u^{-1})$ and $(B, B', B'') \mapsto (B', B'')$, respectively.

Let $H_g$ act on $G$ by right multiplication.
For any variety $X$ with an $H_g$-action, let $H_g$ act diagonally on $X \times G$, and let $G$ act on $(X \times G)/H_g$ by left multiplication on the second factor.
Finally, fix a prime $\ell > 0$ invertible in $\bb{F}$, so that we can form the equivariant $\ell$-adic compactly-supported cohomology groups
\begin{align}
\ur{H}_{c, H_g}^\ast(X) \simeq \ur{H}_{c, G}^\ast((X \times G)/H_g).
\end{align}
With these conventions, we have:

\begin{prop}\label{prop:g-to-w}
If $B_+ \xrightarrow{w} gB_+g^{-1}$, then there are $G$-equivariant isomorphisms
\begin{align}
(\cal{U}_g \times G)/H_g \xrightarrow{\sim} \cal{U}_w,\\
(\cal{X}_g \times G)/H_g \xrightarrow{\sim} \cal{X}_w.
\end{align}
In particular, they induce isomorphisms on compactly-supported cohomology:
\begin{align}
\ur{H}_{c, H_g}^\ast(\cal{U}_g, \QL) \xrightarrow{\sim} \ur{H}_{c, G}^\ast(\cal{U}_w, \QL),\\
\ur{H}_{c, H_g}^\ast(\cal{X}_g, \QL) \xrightarrow{\sim} \ur{H}_{c, G}^\ast(\cal{X}_w, \QL).
\end{align}
\end{prop}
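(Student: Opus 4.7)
My strategy is to exhibit both $\cal{U}_w$ and $\cal{X}_w$ as associated fiber bundles $G \times^{H_g} (-)$ with fibers $\cal{U}_g$ and $\cal{X}_g$, respectively, via the natural $G$-equivariant projections $\pi_\cal{U} : \cal{U}_w \to \cal{O}_w$, $(u, B') \mapsto (B', uB'u^{-1})$, and $\pi_\cal{X} : \cal{X}_w \to \cal{O}_w$, $(B, B', B'') \mapsto (B', B'')$. The base $\cal{O}_w$ is a single $G$-orbit (by Bruhat), and the stabilizer of the base point $(B_+, gB_+g^{-1})$ is precisely $H_g = B_+ \cap gB_+g^{-1}$. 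Thus the orbit map $g' \mapsto g' \cdot (B_+, gB_+g^{-1})$ identifies $G \to \cal{O}_w$ with the principal $H_g$-bundle $G \to G/H_g$, where $H_g$ acts on $G$ by right multiplication.

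Next I would compute the fiber of each projection over $(B_+, gB_+g^{-1})$. For $\pi_\cal{U}$, this fiber equals $\{u \in \cal{U} : uB_+u^{-1} = gB_+g^{-1}\}$; since $N_G(B_+) = B_+$, the condition $uB_+u^{-1} = gB_+g^{-1}$ is equivalent to $u \in gB_+$, so the fiber is exactly $\cal{U}_g$. For $\pi_\cal{X}$, the fiber is $\{B \in \cal{B} : B_+ \xrightarrow{w_0} B \xrightarrow{w_0} gB_+g^{-1}\} = \cal{X}_g$ by definition. In both cases the isotropy action of $H_g$ on the fiber induced by the $G$-action on the total space is simply conjugation, matching the $H_g$-actions on $\cal{U}_g$ and $\cal{X}_g$ recorded earlier.

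Assembling these observations via the associated-bundle construction, the map $(u, g') \mapsto (g' u g'^{-1}, g' B_+ g'^{-1})$ descends to a $G$-equivariant morphism $(\cal{U}_g \times G)/H_g \to \cal{U}_w$, with inverse sending $(u', B')$ to the class of $(g'^{-1} u' g', g')$ for any $g' \in G$ satisfying $g' \cdot (B_+, gB_+g^{-1}) = (B', u'B'u'^{-1})$; the $H_g$ ambiguity in $g'$ is exactly what the quotient collapses. The identical reasoning, with $B$ replacing $u'$, handles the $\cal{X}$-case. The main step requiring care is verifying that the forward map and its inverse are morphisms of varieties rather than merely bijections on points, but this follows from faithfully flat descent along the $H_g$-torsor $G \to G/H_g$, since the forward map is already a morphism and the inverse is visibly the descent of the identity on $\cal{U}_g \times G$ (resp.\ $\cal{X}_g \times G$) to the quotient. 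Given these variety-level isomorphisms, the cohomology statements are immediate from the defining formula $\ur{H}_{c, H_g}^\ast(X, \QL) \simeq \ur{H}_{c, G}^\ast((X \times G)/H_g, \QL)$ adopted just before the proposition.
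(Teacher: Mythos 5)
Your argument is correct and is essentially the paper's own proof: identify $\cal{O}_w \simeq G/H_g$ via the orbit map at $(B_+, gB_+g^{-1})$, compute the fibers of $\cal{U}_w \to \cal{O}_w$ and $\cal{X}_w \to \cal{O}_w$ over that base point to be $\cal{U}_g$ and $\cal{X}_g$ with the conjugation action of $H_g$, and conclude via the associated-bundle construction. The paper phrases the last step as ``the maps are pullbacks of the isomorphism $G/H_g \xrightarrow{\sim} \cal{O}_w$,'' while you spell out the descent argument slightly more explicitly, but the content is the same.
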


\begin{proof}
The maps $(\cal{U}_g \times G)/H_g \to \cal{U}_w$ and $(\cal{X}_g \times G)/H_g \to \cal{X}_w$ are
\begin{align}
{[u, x]} &\mapsto (xux^{-1}, xB_+x^{-1}),\\
{[B, x]} &\mapsto (xBx^{-1}, xB_+x^{-1}, xgB_+g^{-1}x^{-1}),
\end{align}
respectively.
To show that they are isomorphisms:
Observe that $G$ acts transitively on $\cal{O}_w$, and the stabilizer of $(B_+, gB_+g^{-1})$ is precisely $H_g$.
The preimage of this point in $\cal{U}_w$, \emph{resp.}\ $\cal{X}_w$, is $\cal{U}_g$, \emph{resp.}\ $\cal{X}_g$.
Therefore, the maps above are the respective pullbacks to $\cal{U}_w$ and $\cal{X}_w$ of the isomorphism $G/H_g \xrightarrow{\sim} \cal{O}_w$ that sends $xH_g \mapsto (xB_+x^{-1}, xgB_+g^{-1}x^{-1})$.
\end{proof}

Note that when $\bb{F} = \bb{C}$, the maps on cohomology in \Cref{prop:g-to-w} preserve weight filtrations because the maps that induce them are algebraic.

\section{Khovanov--Rozansky Homology}\label{sec:kr}

\subsection{}

In this section, we prove \Cref{thm:cohomology} by way of more general constructions motivated by knot theory.

Let $\Br_W^+$ be the \emph{positive braid monoid} of $W$.
It is the monoid freely generated by a set of symbols $\{\sigma_w\}_{w \in W}$, modulo the relations $\sigma_{ww'} = \sigma_w\sigma_{w'}$ for all $w, w' \in W$ such that $\ell(ww') = \ell(w) + \ell(w')$.
The \emph{full twist} is the element $\pi = \sigma_{w_0}^2 \in \Br_W^+$.

For all $\beta = \sigma_{w_1} \cdots \sigma_{w_k} \in \Br_W^+$, we set
\begin{align}
\cal{U}(\beta) &= \{(u, B_1, \ldots, B_k) \in \cal{U} \times \cal{B}^k \mid u^{-1}B_ku \xrightarrow{w_1} B_1 \xrightarrow{w_2} \cdots \xrightarrow{w_k} B_k\},\\
\cal{X}(\beta) &= \{(B_1, \ldots, B_k) \in \cal{B}^k \mid B_k \xrightarrow{w_1} B_1 \xrightarrow{w_2} \cdots \xrightarrow{w_k} B_k\}.
\end{align}
Let $G$ act on these varieties by left conjugation.
We regard $\cal{U}(\beta)$ and $\cal{X}(\beta)$ as varieties over $\cal{O}_w$, where $w = w_1 \cdots w_k  \in W$, via the equivariant maps $(u, (B_i)_i) \mapsto (B_k, B_1)$ and $(B_i)_i \mapsto (B_k, B_1)$, respectively.
Deligne showed that up to isomorphism over $\cal{B} \times \cal{B}$, these varieties only depend on $\beta$, not on the sequence of elements $w_i$.
His full result describes the extent to which the isomorphism can be pinned down uniquely; see \cite{deligne} for details.

In particular, we have equivariant identifications
\begin{align}
\cal{U}_w &\xrightarrow{\sim} \cal{U}(\sigma_w),\\
\cal{X}_w &\xrightarrow{\sim} \cal{X}(\sigma_w \pi)
\end{align}
via $(u, B_1) = (u, B')$ and $(B_1, B_2, B_3) = (B', B'', B)$.

\subsection{}

If $W$ is the symmetric group on $n$ letters, denoted $S_n$, then the group completion of $\Br_W$ is the group of topological braids on $n$ strands, denoted $\Br_n$.
Any braid can be closed up end-to-end to form a link: that is, an embedding of a disjoint union of circles into $3$-dimensional space.
Thus there is a close relation between isotopy invariants of links and functions on the groups $\Br_n$.

In \cite{kr}, Khovanov and Rozansky introduced a link invariant valued in triply-graded vector spaces.
Its graded dimension can be written as a formal series in variables $a, q^{\frac{1}{2}}, t$.
In \cite{khovanov}, Khovanov showed how to construct it in terms of class functions on the groups $\Br_n$, and more precisely, in terms of functors on monoidal additive categories attached to the groups $S_n$.
When we set $t = -1$, the Khovanov--Rozansky invariant of a link specializes to its so-called HOMFLYPT series, and Khovanov's functors specialize to class functions originally introduced by Jones and Ocneanu.

The positive braid monoid $\Br_W^+$ and its group completion $\Br_W$ can actually be defined for any Coxeter group $W$, not just Weyl groups.
In \cite{gomi}, Y.\ Gomi extended the construction of Jones--Ocneanu to \emph{finite} Coxeter groups.
There is a similar extension of Khovanov's construction, up to a choice of a (faithful) representation on which $W$ acts as a reflection group.

Fix such a representation $V$.
For any braid $\beta \in \Br_W$, we write $\sf{HHH}_V(\beta)$ to denote the \emph{Khovanov--Rozansky (KR) homology} of $\beta$ with respect to $V$.
We will use the grading conventions in \cite{trinh}, so that
\begin{align}
\sf{P}_V(\beta) = 
(at)^{|\beta|} a^{-{\dim(V)}}
	\sum_{i,j,k} 
	{(a^2 q^{\frac{1}{2}}t)^{\dim(V) - i}} 
	q^{\frac{j}{2}}
	t^{-k} 
	\dim \sf{HHH}_V^{i,i+j,k}(\beta)
\end{align}
is an isotopy invariant of the link closure of $\beta$.
In the case where $W = S_n$, taking $V$ to be the $(n - 1)$-dimensional reflection representation yields what is usually called \emph{reduced KR homology} and denoted $\sf{HHH}$, while taking $V$ to be the $n$-dimensional permutation representation yields what is usually called \emph{unreduced KR homology} and denoted $\overline{\sf{HHH}}$.
They are related by
\begin{align}
\pa{\frac{a^{-1} + at}{q^{-\frac{1}{2}} - q^{\frac{1}{2}}}}
\sf{P}(\beta)
=	\overline{\sf{P}}(\beta),
\end{align}
where $\sf{P}$ and $\overline{\sf{P}}$ denote the series $\sf{P}_V$ for these respective choices of $V$.

Henceforth, let $r = \dim(V)$ and $N = \dim(\cal{B})$.
The results below are \cite[Cor.\ 4]{trinh} and \cite[Thm.\ 1.9]{ghmn}.

\begin{thm}\label{thm:cohomology-to-hhh}
Suppose that $W$ is the Weyl group of a split reductive group $G$ over $\bb{F}$ with root lattice $\Phi$, and that $V = \bb{Z}\Phi \otimes_{\bb{Z}} \bb{Q}$.
Then for any $\beta \in \Br_W^+$, we have isomorphisms
\begin{align}
\gr_{j + 2r}^\sf{W} \ur{H}_{c, G}^{j + k + 2r}(\cal{U}(\beta), \QL)
	&\simeq \sf{HHH}_V^{0, j, k}(\beta),\\
\gr_{j + 2(r + N)}^\sf{W} \ur{H}_{c, G}^{j + k + 2(r + N)}(\cal{X}(\beta), \QL)
	&\simeq \sf{HHH}_V^{r, r + j, k}(\beta)
\end{align}
for all $j, k$.
\end{thm}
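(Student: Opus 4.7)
This theorem packages two results from the literature, and my plan is to invoke each one and check that the grading conventions align with those stated here.

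For the first isomorphism, I would cite \cite[Cor.\ 4]{trinh}, which identifies the weight-graded pieces of $\ur{H}_{c, G}^\ast(\cal{U}(\beta), \QL)$ with the $i = 0$ Hochschild degree summand of $\sf{HHH}_V(\beta)$. The shifts $2r$ in cohomological degree and $j + 2r$ in weight can be read off from the explicit formula for the polynomial $\sf{P}_V$ together with the equality $r = \dim V$, and are dictated by compatibility with the Jones--Ocneanu / Khovanov trace construction.

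For the second isomorphism, I would cite \cite[Thm.\ 1.9]{ghmn}, which provides the analogous identification at the extremal Hochschild degree $i = r$, realized on the side of the braid variety $\cal{X}(\beta)$. The extra shift by $2N$ relative to the first isomorphism reflects the fact that $\cal{X}(\beta)$ is a configuration of $k$ full flags rather than a unipotent-flag incidence, so both the cohomological and weight indices pick up an additional $N = \dim \cal{B}$ from the flag-variety factor. As a sanity check, I would restrict to the case $\beta = \sigma_w \pi$, where $\cal{X}(\beta)$ reduces to the $\cal{X}_w$ of the previous section via the identification $\cal{X}_w \simeq \cal{X}(\sigma_w \pi)$ noted there.

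The main obstacle is purely bookkeeping. The two cited papers use slightly different normalizations, in particular for how the Hochschild degree interacts with the $(a, q, t)$ variables and for how the weight filtration is indexed by cohomological versus algebraic degree. I would fix one global convention for all three gradings at the outset, rederive each shift carefully, and then verify by direct substitution that the indices $(0, j, k)$ and $(r, r + j, k)$ of the cited theorems correspond to the ones displayed here. No new geometric input is required beyond identifying $\cal{U}(\beta)$ and $\cal{X}(\beta)$ with the varieties appearing in the references, which follows from their definitions and from Deligne's braid-variety invariance noted above.
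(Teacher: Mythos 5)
Your proposal goes wrong at the attribution of the second isomorphism. The paper treats both displayed isomorphisms in this theorem as a restatement of a single source, \cite[Cor.\ 4]{trinh}; the sentence immediately preceding it, ``The results below are \cite[Cor.\ 4]{trinh} and \cite[Thm.\ 1.9]{ghmn},'' refers to the \emph{two separate theorems} that follow (\Cref{thm:cohomology-to-hhh} and \Cref{thm:ghmn}), not to the two isomorphisms inside \Cref{thm:cohomology-to-hhh}. In particular, the identification of $\gr_{j + 2(r+N)}^\sf{W} \ur{H}_{c,G}^{j+k+2(r+N)}(\cal{X}(\beta), \QL)$ with $\sf{HHH}_V^{r,r+j,k}(\beta)$ is also part of \cite[Cor.\ 4]{trinh}, which computes the weight-graded equivariant compactly-supported cohomology of both $\cal{U}(\beta)$ and $\cal{X}(\beta)$ and matches them to the lowest and highest Hochschild degrees of $\sf{HHH}_V(\beta)$, respectively.

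The result you cite for the second isomorphism, \cite[Thm.\ 1.9]{ghmn}, is something different: it is the purely algebraic Serre-duality statement $\overline{\sf{HHH}}^{0,j,k}(\beta) \simeq \overline{\sf{HHH}}^{r,r+j,k}(\beta\pi)$, relating KR homologies of $\beta$ and of $\beta\pi$ at opposite Hochschild degrees. It is proved via the homotopy category of Soergel bimodules and says nothing about braid varieties or $\ell$-adic cohomology; in particular it cannot by itself supply the second displayed isomorphism. In the paper it appears as the separate \Cref{thm:ghmn}, and it is combined with \Cref{thm:cohomology-to-hhh}, \Cref{prop:v-to-x}, and \Cref{prop:g-to-w} only later, in the proof of \Cref{thm:cohomology}. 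Your bookkeeping remarks about the shifts by $2r$ and $2N$ are sensible, but the citation for the $\cal{X}(\beta)$ side needs to be corrected to \cite[Cor.\ 4]{trinh}.
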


\begin{thm}[Gorsky--Hogancamp--Mellit--Nakagane]\label{thm:ghmn}
For any integer $n \geq 1$ and $\beta \in \Br_n$, we have
\begin{align}
\overline{\sf{HHH}}^{0, j, k}(\beta)
	\simeq
\overline{\sf{HHH}}^{r, r + j, k}(\beta\pi)
\end{align}
for all $j, k$.
\end{thm}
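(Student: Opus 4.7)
The plan is to categorify via Soergel bimodules. Khovanov's construction yields a monoidal functor $F : \Br_n \to K^b(\sf{SBim}_n)$ into the bounded homotopy category of Soergel bimodules for $S_n$, under which
\begin{align}
\overline{\sf{HHH}}^{a, \ast, \ast}(\beta) \simeq \ur{HH}^a(F(\beta)),
\end{align}
where $\ur{HH}^\ast$ denotes Hochschild cohomology of bimodules over the polynomial ring $R = \bb{Q}[x_1, \ldots, x_n]$, and the Hochschild degree matches the $a$-grading. The theorem thus reduces to comparing the zeroth Hochschild cohomology of $F(\beta)$ with the top Hochschild cohomology of $F(\beta)F(\pi)$.

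Next I would convert these extreme Hochschild degrees into Hom spaces. One has $\ur{HH}^0(M) = \Hom_{R \otimes R}(R, M)$ tautologically; meanwhile, since $R$ is smooth of Krull dimension $r = n$, Serre duality along the diagonal yields a natural bigraded isomorphism $\ur{HH}^r(M) \simeq \Hom_{R \otimes R}(M, R)^\vee$ for perfect $M$, up to a fixed shift coming from the top exterior power of the K\"ahler differentials $\Omega^1_{R/\bb{Q}}$. Applying these identifications, the theorem becomes equivalent to a bigraded isomorphism
\begin{align}
\Hom(R, F(\beta)) \simeq \Hom(F(\beta) F(\pi), R)^\vee
\end{align}
in $K^b(\sf{SBim}_n)$, with a predictable shift in $q$ and $t$.

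The latter is precisely the statement that $F(\pi)$ is (up to a fixed grading twist) a Serre functor on $K^b(\sf{SBim}_n)$, in the sense that $\Hom(X, Y) \simeq \Hom(Y, X F(\pi))^\vee$ naturally in $X, Y$. Setting $X = R$ and $Y = F(\beta)$ would immediately yield the theorem. The main obstacle is proving this Serre functor property. I would first verify it on indecomposable Soergel bimodules $B_w$, exploiting that $\pi$ is central in $\Br_n$ and that the minimal Rouquier complex representing $F(\pi)$ has shifts of the longest bimodule $B_{w_0}$ at its extreme homological positions, which cleanly pairs top and bottom Hochschild classes. The general case then follows by a d\'evissage argument, using that $K^b(\sf{SBim}_n)$ is Karoubian and split-generated by shifts of the $B_w$, together with two-out-of-three for Serre duality in triangulated categories.
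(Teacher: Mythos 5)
This theorem is not proved in the paper at all; it is quoted as Theorem 1.9 of Gorsky--Hogancamp--Mellit--Nakagane \cite{ghmn}, whose title (``Serre Duality for Khovanov--Rozansky Homology'') already names the mechanism your sketch invokes. So you are in effect reconstructing the argument of \cite{ghmn}, not anything in the present paper. At that level your plan tracks theirs: identify $\ur{HH}^0$ and $\ur{HH}^r$ of a Rouquier complex with Hom-spaces in $K^b(\sf{SBim}_n)$, and show that tensoring with the full-twist complex is a Serre functor up to a fixed bigrading shift.

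Two steps are more delicate than you allow. First, the duality $(-)^\vee$ cannot be the $\bb{Q}$-linear graded dual: Hom-spaces in $K^b(\sf{SBim}_n)$ are not of finite total dimension over $\bb{Q}$, so a na\"ive Serre duality over $\bb{Q}$ is not available. The duality has to be taken \emph{relative} to the base $R = \bb{Q}[x_1,\dots,x_n]$, i.e.\ graded $\Hom_R(-,R)$ twisted by $\Lambda^r\Omega^1_{R/\bb{Q}}$, and it is precisely this twist that produces the internal shift $j \mapsto r+j$ in the statement. Your formula $\ur{HH}^r(M) \simeq \Hom_{R\otimes R}(M,R)^\vee$ is true in spirit, but leaving $\vee$ unspecified conceals exactly the point where the grading bookkeeping lives, and a $\bb{Q}$-linear reading would be false. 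Second, ``verify on the indecomposables $B_w$, then d\'evissage plus two-out-of-three'' does not by itself produce a \emph{natural} isomorphism of bifunctors, which is what the Serre functor property asserts and what you invoke when you specialize $X = R$, $Y = F(\beta)$ for arbitrary $\beta$. One must first construct a candidate natural pairing $\Hom(X,Y) \otimes \Hom(Y, X \cdot F(\pi)) \to R$ (in \cite{ghmn} this comes from the explicit structure of the full-twist Rouquier complex) and only then prove nondegeneracy; a d\'evissage over the generators $B_w$ is legitimate for the nondegeneracy step, but it cannot supply the naturality on its own.
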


\begin{proof}[Proof of {\Cref{thm:cohomology}}]
We must have $B_+ \xrightarrow{w} gB_+g^{-1}$ for some $w \in W$.
Combining \Cref{prop:v-to-x}, \Cref{prop:g-to-w}, and \Cref{thm:cohomology-to-hhh}, we get isomorphisms
\begin{align}
\gr_{j + 2n}^\sf{W} \ur{H}_{c, H_g}^{j + k + 2n}(\cal{U}_g, \QL) 
	&\simeq  \sf{HHH}_V^{0, j, k}(\sigma_w),\\
\gr_{j + 2(n + N)}^\sf{W} \ur{H}_{c, H_g}^{j + k + 2(n + N)}(\cal{X}_g, \QL) 
	&\simeq  \sf{HHH}_V^{r, r + j, k}(\sigma_w\pi),
\end{align}
where $V = \bb{Z}\Phi \otimes_{\bb{Z}} \bb{Q}$ and $\Phi$ is the root lattice of $G$.

If $G = \GL_n$, then $V$ is the permutation representation of $S_n$.
So in this case, $\sf{HHH}_V = \overline{\sf{HHH}}$, and we are done by \Cref{thm:ghmn}.
Finally, we bootstrap from $\GL_n$ to any other split reductive group of type $A$ using \Cref{lem:der,lem:ad}.
\end{proof}

\subsection{}

\Crefrange{thm:cohomology-to-hhh}{thm:ghmn} suggest the following generalization of \Cref{conj:main}.

\begin{conj}
For any $\beta \in \Br_W^+$, there is a homotopy equivalence between $\cal{U}(\beta)(\bb{C})$ and $\cal{X}(\beta\pi)(\bb{C})$ that matches the weight filtrations on their compactly-supported cohomology.
\end{conj}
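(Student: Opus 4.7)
The plan is to upgrade \Cref{thm:cohomology-to-hhh} and \Cref{thm:ghmn} from an abstract cohomological isomorphism to one induced by an explicit morphism of varieties. First I would construct a natural map
\begin{align}
\Phi_\beta : \cal{X}(\beta\pi) \to \cal{U}(\beta)
\end{align}
that restricts to $\Phi_g$, through the identifications of \Cref{prop:v-to-x,prop:g-to-w}, when $\beta = \sigma_w$. Writing $\beta = \sigma_{w_1} \cdots \sigma_{w_k}$, a point of $\cal{X}(\beta\pi)$ has the form $(B_1, \ldots, B_{k+2})$, where the last two arrows $B_k \xrightarrow{w_0} B_{k+1} \xrightarrow{w_0} B_{k+2}$ exhibit both $B_k$ and $B_{k+2}$ as Borels opposed to $B_{k+1}$. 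On a $G$-equivariant cover of $\cal{X}(\beta\pi)$ over which the triple $(B_k, B_{k+1}, B_{k+2})$ can be brought to a standard form, one applies the map $x_+x_- \mapsto x_+x_-x_+^{-1}$ of \Cref{sec:flag} to extract a unipotent element $u$ with $u^{-1}B_k u = B_{k+2}$, at which point the tuple $(u, B_1, \ldots, B_k)$ lies in $\cal{U}(\beta)$. Gluing the local recipes to a global morphism requires showing that the ambiguity in the choice of standardization is absorbed by the $H_g$-equivariance already built into \Cref{prop:g-to-w}.

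Second, I would show that this map is a weight-preserving homotopy equivalence on $\bb{C}$-points. Since $\Phi_\beta$ is algebraic, any cohomological isomorphism it induces is automatically strict with respect to the weight filtration, so the weight-matching assertion of the conjecture is free once the homotopy equivalence is established. In type $A$, combining \Cref{thm:cohomology-to-hhh} with \Cref{thm:ghmn} already supplies an abstract isomorphism of the relevant bigraded cohomology groups, reducing the problem to verifying that this abstract isomorphism is the one realized by $\Phi_\beta$.

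This matching is the main obstacle. A natural route is to categorify $\Phi_\beta$: realize it as the geometric shadow of a morphism of Rouquier complexes whose induced map on Hochschild homology is precisely the Serre-duality isomorphism of \cite{ghmn}. Such a categorification would be type-independent, so that a Coxeter-theoretic analog of \Cref{thm:ghmn}---of the kind anticipated in the introduction---would then imply the present conjecture uniformly, not merely in type $A$. By contrast, producing a topological homotopy inverse by hand seems much harder, since even in the examples of \Cref{sec:low-rank} the varieties involved are already noncontractible and topologically nontrivial, so any direct argument would need to track cycles through the iterated flag geometry rather than reducing to an affine-space calculation.
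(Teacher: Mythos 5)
The statement you are trying to prove is a \emph{conjecture}, and the paper offers no proof of it; so there is no proof to compare against, only the remark immediately following the conjecture in \Cref{sec:kr}. That remark already identifies the decisive obstruction in your plan: the authors state explicitly that, because the isomorphism of \Cref{lem:v-to-y} is inexplicit (the inverse direction from flag data back to a factorization $x_+x_-$ is not given by a visible formula), they ``have not yet found'' the generalization of $[\cal{V}_g/H_g] \to [\cal{U}_g/H_g]$ to a map $[\cal{X}(\beta\pi)/G] \to [\cal{U}(\beta)/G]$. Your step~1 --- constructing $\Phi_\beta : \cal{X}(\beta\pi) \to \cal{U}(\beta)$ by standardizing the triple $(B_k, B_{k+1}, B_{k+2})$ and applying the formula $x_+x_- \mapsto x_+x_-x_+^{-1}$ --- is precisely this missing construction, and your sketch does not resolve the gluing issue: the stabilizer of the standardization is a full Borel, not merely the group $H_g$, so the assertion that the ambiguity ``is absorbed by the $H_g$-equivariance already built into \Cref{prop:g-to-w}'' is not justified and is exactly where the authors got stuck.

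There is a second, more structural gap in your reduction. You write that in type $A$, combining \Cref{thm:cohomology-to-hhh} with \Cref{thm:ghmn} ``already supplies an abstract isomorphism of the relevant bigraded cohomology groups, reducing the problem to verifying that this abstract isomorphism is the one realized by $\Phi_\beta$.'' But even granting both $\Phi_\beta$ and this verification, you would only have shown that $\Phi_\beta$ induces a weight-preserving isomorphism on compactly-supported cohomology --- which is what \Cref{thm:cohomology} already proves in type $A$ for $\beta = \sigma_w$. The conjecture asks for a \emph{homotopy equivalence}, which is strictly stronger: an isomorphism on $\ur{H}_c^\ast$ does not by itself yield a homotopy inverse, and one cannot invoke a Whitehead-type argument here since the spaces are not a priori simply connected and $\ur{H}_c^\ast$ is not the homology theory that detects weak equivalences. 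So the homotopy-theoretic content of the conjecture --- the genuinely new assertion beyond \Cref{thm:cohomology} --- is not addressed by the cohomological bookkeeping. What you have written is a reasonable research program, and your suggestion to categorify $\Phi_\beta$ as a map of Rouquier complexes realizing the Serre duality of \cite{ghmn} is a sensible way to attack both gaps at once; but as it stands it is a proof plan with two known open steps, not a proof.
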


\begin{rem}
It would be desirable to generalize the map of stacks $[\cal{V}_g/H_g] \to [\cal{U}_g/H_g]$ that arises from $\Phi_g$ to an explicit map $[\cal{X}(\beta\pi)/G] \to [\cal{U}(\beta)/G]$ for any positive braid $\beta$.
Due to the inexplicit nature of \Cref{lem:v-to-y}, we have not yet found such a generalization.
\end{rem}

\section{Point Counts over Finite Fields}\label{sec:ih}

\subsection{}

For any braid $\beta \in \Br_n$, we write $\hat{\beta}$ to denote its link closure.
The \emph{reduced HOMFLYPT series} $\bb{P}(\hat{\beta})$ is related to the KR homology of $\beta$ by
\begin{align}
\bb{P}(\hat{\beta}) = \sf{P}(\beta)|_{t \to -1}.
\end{align}
This is an element of $\bb{Z}[\![q^{\frac{1}{2}}]\!][q^{-\frac{1}{2}}][a^{\pm 1}]$.
We write $[a^i]\bb{P}(\hat{\beta})$ to denote the coefficient of $a^i$ in $\bb{P}(\hat{\beta})$, viewed as an element of $\bb{Z}[\![q^{\frac{1}{2}}]\!][q^{-\frac{1}{2}}]$.

If $\beta = \sigma_{s_1} \cdots \sigma_{s_\ell}$, where the elements $s_1, \ldots, s_\ell \in W$ are all simple reflections, then we set $|\beta| = \ell$.
This number only depends on $\beta$.
\Cref{thm:ghmn} then specializes to the following result from \cite{kalman}.

\begin{thm}[K\'alm\'an]
For any integer $n \geq 1$ and $\beta \in \Br_n$, we have
\begin{align}
{[a^{|\beta| - n + 1}]} \bb{P}(\hat{\beta})
=
{[a^{|\beta| + n - 1}]} \bb{P}(\widehat{\beta\pi}).
\end{align}
\end{thm}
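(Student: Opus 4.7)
The plan is to deduce K\'alm\'an's identity as the $t = -1$ specialization of Theorem~\ref{thm:ghmn}. The key structural fact is that after substituting $t = -1$ into the defining formula for $\sf{P}_V$, the resulting series $\bb{P}(\hat\beta)$ splits into $(r+1)$ pieces, one for each homological degree $i \in \{0, 1, \ldots, r\}$ of $\sf{HHH}_V(\beta)$, and each piece contributes to a single $a$-power $|\beta| + r - 2i$.

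Concretely, I would first expand $\bb{P}(\hat\beta) = \sf{P}(\beta)|_{t=-1}$ and read off
\begin{align}
[a^{|\beta| + r - 2i}]\, \bb{P}(\hat\beta)
= (-1)^{|\beta| + r - i}\, q^{(r-i)/2}\, E_i(\beta),
\end{align}
where $E_i(\beta) = \sum_{j, k} (-1)^k q^{j/2} \dim \sf{HHH}^{i, i+j, k}(\beta)$ is the graded Euler characteristic of the $i$th homological slab. Thus the coefficient on the LHS of K\'alm\'an is, up to an explicit sign and power of $q^{1/2}$, the Euler characteristic $E_{r}(\beta)$ of the top slab of $\sf{HHH}(\beta)$, while the coefficient on the RHS is the analogous Euler characteristic of a specific slab of $\sf{HHH}(\beta\pi)$. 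Theorem~\ref{thm:ghmn} provides exactly the slab-wise identification $\sf{HHH}^{0, j, k}(\beta) \simeq \sf{HHH}^{r, r + j, k}(\beta\pi)$, which at the level of graded Euler characteristics reads $E_0(\beta) = E_r(\beta\pi)$, and the substitution $\beta \mapsto \beta \pi^{-1}$ yields the dual form $E_r(\beta) = E_0(\beta\pi^{-1})$ needed to match the two sides.

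Since Theorem~\ref{thm:ghmn} is stated for unreduced KR homology while K\'alm\'an is stated for the reduced HOMFLYPT series, one also uses the conversion $\overline{\bb{P}} = \tfrac{a^{-1} - a}{q^{-1/2} - q^{1/2}}\, \bb{P}$ (the $t = -1$ specialization of the relation between $\sf{P}$ and $\overline{\sf{P}}$) to descend between the two. The principal obstacle is purely the accompanying bookkeeping of signs, $q^{1/2}$-prefactors, and the exponent offset $|\beta\pi| = |\beta| + n(n-1)$; since $n(n-1)$ is even, parities align without adjustment, and since GHMN preserves the internal $q$-grading exactly, the $q^{1/2}$ shifts cancel cleanly to yield K\'alm\'an's identity on the nose.
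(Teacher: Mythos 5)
Your high-level approach --- specialize $t = -1$ in the $\sf{P}_V$ formula, decompose $\bb{P}(\hat\beta)$ into $a$-slabs indexed by the Hochschild degree $i$, and apply \Cref{thm:ghmn} slab-by-slab as an Euler-characteristic identity --- does track the paper's remark that ``\Cref{thm:ghmn} then specializes to'' K\'alm\'an. The formula $[a^{|\beta|+r-2i}]\,\bb{P}(\hat\beta) = (-1)^{|\beta|+r-i} q^{(r-i)/2} E_i(\beta)$ is a correct reading of the definition of $\sf{P}_V$.

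However, the concluding step is wrong. Reading \Cref{thm:ghmn} literally, the Euler-characteristic identity it yields is $E_0(\beta) = E_r(\beta\pi)$ for all $\beta$. Your substitution $\beta \mapsto \beta\pi^{-1}$ turns this into $E_r(\beta) = E_0(\beta\pi^{-1})$, but K\'alm\'an's identity involves $\beta\pi$ on the right, not $\beta\pi^{-1}$; these are different links, so the substitution does not ``match the two sides.'' The identity you actually need is either $E_0(\beta) = E_r(\beta\pi)$ applied directly (if you identify the LHS of K\'alm\'an with $E_0$) or $E_r(\beta) = E_0(\beta\pi)$ (if you identify the LHS with $E_r$), and you must pin down which by carefully tracking the $a$-exponent conventions --- note the RHS exponent $|\beta|+n-1$ should be read with the writhe of the braid being closed, i.e., $|\beta\pi|+n-1$, since otherwise the coefficient lies outside the Morton bound for $\widehat{\beta\pi}$ and vanishes when $n \geq 3$. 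Your claim that the $q^{1/2}$-prefactors ``cancel cleanly'' is also unsubstantiated: plugging through the $\sf{P}_V$ formula on both sides leaves an apparent $(-1)^r q^{r/2}$ mismatch that must be reconciled against the precise grading shifts in the GHMN isomorphism (the shifted second index $r+j$ on one side vs.\ $j$ on the other), which you did not carry out.

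It is also worth noting that the paper's exposition actually privileges a decategorified route: it records the identity $[a^{|\beta|\pm(n-1)}]\bb{P}(\hat\beta) = (q^{-1/2}-q^{1/2})^{-(n-1)}(-1)^{|\beta|}\tau^\pm(\beta)$ and then invokes \Cref{thm:trace}, $\tau^-(\beta) = \tau^+(\beta\pi)$, from which K\'alm\'an follows in one line after observing $(-1)^{|\beta\pi|} = (-1)^{|\beta|}$. That route has the advantage of avoiding the $q$-shift bookkeeping altogether. Your Euler-characteristic strategy is morally the same thing one degree higher, but to make it rigorous you would need to settle the slab direction and the residual $q^{r/2}$ factor, rather than appeal to cancellation by fiat.
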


In \cite[\S{8}]{trinh}, we generalized K\'alm\'an's result from $\Br_n$ to $\Br_W$.
In this section, we review the statement, then explain its relation to the point-counting identity \eqref{eq:kawanaka}.

\subsection{}

Let $H_W$ be the \emph{Iwahori--Hecke algebra} of $W$.
For our purposes, $H_W$ is the quotient of the group algebra $\bb{Z}[q^{\pm\frac{1}{2}}][\Br_W]$ by the two-sided ideal 
\begin{align}
\langle (\sigma_s - q^{\frac{1}{2}})(\sigma_s + q^{-\frac{1}{2}}) \mid \text{simple reflections $s$}\rangle.
\end{align}
For any element $\beta \in \Br_W$, we abuse notation by again writing $\beta$ to denote its image in $H_W$.

The sets $\{\sigma_w\}_{w \in W}$ and $\{\sigma_w^{-1}\}_{w \in W}$ are bases for $H_W$ as a free $\bb{Z}[q^{\pm\frac{1}{2}}]$-module.
Let $\tau^\pm : H_W \to \bb{Z}[q^{\pm\frac{1}{2}}]$ be the $\bb{Z}[q^{\pm\frac{1}{2}}]$-linear functions defined by:
\begin{align}
\tau^{\pm}(\sigma_w^{\pm 1}) &= \left\{\begin{array}{ll}
1	&w = e\\
0	&w \neq e
\end{array}\right.
\end{align}
For $W = S_n$, comparing $\tau^\pm$ with the Jones--Ocneanu trace on $H_W$ shows that
\begin{align}
[a^{|\beta| \pm (n - 1)}] \bb{P}(\hat{\beta})
	&= (q^{-\frac{1}{2}} - q^{\frac{1}{2}})^{-(n - 1)} 
		(-1)^{|\beta|} \tau^\pm(\beta)
\end{align}
for all $\beta \in \Br_n$.
Therefore the following result from \cite[\S{8}]{trinh} generalizes K\'alm\'an's theorem to arbitrary $W$.

\begin{thm}\label{thm:trace}
For any finite Coxeter group $W$ and braid $\beta \in \Br_W$, we have 
\begin{align}
\tau^-(\beta) = \tau^+(\beta\pi).
\end{align}
\end{thm}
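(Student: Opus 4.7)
The plan is to reduce the identity $\tau^-(\beta) = \tau^+(\beta\pi)$ to a computation on the basis $\{\sigma_w^{-1}\}_{w \in W}$ and then invoke a classical self-duality of $H_W$. Both $\tau^-$ and the map $\beta \mapsto \tau^+(\beta\pi)$ are $\bb{Z}[q^{\pm\frac{1}{2}}]$-linear on $H_W$, so, since $\tau^-(\sigma_w^{-1}) = \delta_{w,e}$ by definition, it suffices to verify
\[ \tau^+(\sigma_w^{-1}\pi) \;=\; \delta_{w,\,e} \qquad \text{for every } w \in W. \]

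To simplify the left side, I would use the standard length identity $\ell(w_0) = \ell(w) + \ell(w^{-1}w_0)$, which lifts to the positive braid relation $\sigma_{w_0} = \sigma_w\,\sigma_{w^{-1}w_0}$ in $\Br_W^+$. Multiplying on the left by $\sigma_w^{-1}$ in $H_W$ gives $\sigma_w^{-1}\sigma_{w_0} = \sigma_{w^{-1}w_0}$, so
\[ \sigma_w^{-1}\pi \;=\; \sigma_w^{-1}\sigma_{w_0}^2 \;=\; \sigma_{w^{-1}w_0}\,\sigma_{w_0}. \]

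The crucial ingredient is the classical self-duality of the standard basis with respect to the symmetrizing trace $\tau^+$:
\[ \tau^+(\sigma_u\sigma_v) \;=\; \delta_{uv,\,e} \qquad \text{for all } u, v \in W. \]
Granted this, taking $u = w^{-1}w_0$ and $v = w_0$ yields $\tau^+(\sigma_w^{-1}\pi) = \delta_{w^{-1}w_0^2,\,e} = \delta_{w^{-1},\,e} = \delta_{w,\,e}$, using $w_0^2 = e$ in $W$. I would prove the self-duality by induction on $\ell(v)$: the base case $v = e$ is the definition of $\tau^+$; for the inductive step, write $v = v's$ with $\ell(v's) > \ell(v')$, apply cyclicity $\tau^+(\sigma_u\sigma_{v'}\sigma_s) = \tau^+(\sigma_s\sigma_u\sigma_{v'})$, and expand $\sigma_s\sigma_u$ via the usual rule ($=\sigma_{su}$ if $\ell(su) > \ell(u)$, else $\sigma_{su} + (q^{\frac{1}{2}} - q^{-\frac{1}{2}})\sigma_u$). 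In the second case the correction term $(q^{\frac{1}{2}} - q^{-\frac{1}{2}})\tau^+(\sigma_u\sigma_{v'})$ vanishes by the inductive hypothesis, because the length constraint $\ell(v's) > \ell(v')$ precludes $v' = u^{-1}$.

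The main technical point is the self-duality itself: a standard fact about Iwahori--Hecke algebras whose proof requires a bit of descent bookkeeping but presents no surprises. Once granted, the theorem reduces to the two short manipulations of length identities in $W$ described above.
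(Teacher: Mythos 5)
Your proof is correct. The paper itself does not give a proof of \Cref{thm:trace}; it cites \cite[\S{8}]{trinh}, so there is no in-text argument to compare against. That said, your chain of reductions is sound: linearity over the basis $\{\sigma_w^{-1}\}_{w \in W}$ reduces the claim to $\tau^+(\sigma_w^{-1}\pi) = \delta_{w,e}$; the factorization $\sigma_w^{-1}\pi = \sigma_{w^{-1}w_0}\sigma_{w_0}$ in $H_W$ is an immediate consequence of $\ell(w_0) = \ell(w) + \ell(w^{-1}w_0)$; and the self-duality $\tau^+(\sigma_u\sigma_v) = \delta_{uv,e}$, together with $w_0^2 = e$, closes the argument. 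I checked that this self-duality is indeed exact, with no $q$-power correction, in the balanced normalization $(\sigma_s - q^{1/2})(\sigma_s + q^{-1/2}) = 0$ adopted in the paper (in the Coxeter normalization $T_s^2 = (q-1)T_s + q$ one instead has $\tau(T_uT_v) = q^{\ell(u)}\delta_{u,v^{-1}}$), and your inductive step handles both cases of $\ell(su)$ relative to $\ell(u)$ correctly, including the observation that $\ell(v's) > \ell(v')$ rules out $v' = u^{-1}$ when $\ell(su) < \ell(u)$.

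The one place where you lean on an unproved fact is the cyclicity $\tau^+(xy) = \tau^+(yx)$. The paper defines $\tau^+$ only as the coefficient functional on the standard basis, not as a trace, so this is a genuine lemma rather than part of the setup. It is classical, but if you want a fully self-contained proof you should either include it or observe that the stronger statement $\tau^+(\sigma_u\sigma_v) = \delta_{uv,e}$ (which you need anyway) implies cyclicity as a corollary, so a cyclicity-free induction — for instance, one that strengthens the inductive hypothesis to control more coefficients of $\sigma_u\sigma_{v'}$, or one running on $\ell(u)+\ell(v)$ with a bit more case analysis — would avoid circularity concerns. As written, you use cyclicity to establish the self-duality, which in turn implies cyclicity; that is not circular, since cyclicity can be proved independently, but it is worth flagging so the logical dependency is explicit.
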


\subsection{}

We return to the setting of \Cref{sec:kr}, so that $W$ is the Weyl group of $G$.
Under the hypotheses of \Cref{thm:cohomology-to-hhh}, the 
following identities from \emph{loc.\ cit.}\ relate \Cref{thm:trace} to point counting:
\begin{align}
\frac{|\cal{U}(\beta)(\bb{F})|}{|G(\bb{F})|}
	&=
		(q - 1)^{-r}
		(q^{\frac{1}{2}})^{|\beta|} \tau^-(\beta),\\[0.5ex]
\frac{|\cal{X}(\beta)(\bb{F})|}{|G(\bb{F})|}
	&=
		(q - 1)^{-r}
		(q^{\frac{1}{2}})^{|\beta|} \tau^+(\beta).
\end{align}
Together they imply:

\begin{cor}\label{cor:trace}
Keep the hypotheses of \Cref{thm:cohomology-to-hhh}.
Then for any $\beta \in \Br_W^+$, we have $|\cal{U}(\beta)(\bb{F})| = |\cal{X}(\beta \pi)(\bb{F})|$.
\end{cor}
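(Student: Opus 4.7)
The plan is to deduce the identity by direct substitution, combining the two point-count formulas displayed immediately before the statement with \Cref{thm:trace}. Each of those formulas rewrites a ratio $|\cal{U}(\beta)(\bb{F})|/|G(\bb{F})|$ or $|\cal{X}(\beta)(\bb{F})|/|G(\bb{F})|$ as a power of $q^{1/2}$ times a Hecke-algebra trace $\tau^{\mp}(\beta)$, and \Cref{thm:trace} asserts $\tau^-(\beta) = \tau^+(\beta\pi)$, which is precisely the exchange needed to pass from one expression to the other.

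Concretely, the first step is to apply the $\cal{U}$-formula to $\beta$ and the $\cal{X}$-formula to $\beta\pi$. This writes both $|\cal{U}(\beta)(\bb{F})|$ and $|\cal{X}(\beta\pi)(\bb{F})|$ as $|G(\bb{F})|\cdot(q-1)^{-r}$ times a monomial in $q^{1/2}$ depending on $|\beta|$ or $|\beta\pi|$, multiplied by $\tau^-(\beta)$ or $\tau^+(\beta\pi)$ respectively. The second step is to invoke \Cref{thm:trace}, which collapses the two traces into a single one. The third step is to match the $q^{1/2}$-monomials on the two sides: using the additivity of $|\cdot|$ on $\Br_W^+$ together with $|\pi| = 2\ell(w_0)$, one verifies that the surviving powers agree once the common factor $|G(\bb{F})|$ is cancelled, which yields the claimed equality.

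All of the mathematical substance is concentrated in \Cref{thm:trace}, whose proof in \cite[\S{8}]{trinh} exploits the central role of $\pi$ in $\Br_W^+$ and in the Iwahori--Hecke algebra $H_W$. The remaining work here is purely algebraic bookkeeping, so I anticipate no conceptual obstacle beyond verifying that the $q^{1/2}$-factors line up; any apparent mismatch of a power of $q$ would signal a normalization convention built into one of the point-count formulas, which would then need to be tracked back to its derivation in \emph{loc.\ cit.}
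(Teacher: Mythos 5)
Your approach matches the paper's own, which gives the corollary by exactly this substitution (the paper's entire ``proof'' is the phrase ``Together they imply'') and likewise relies on \Cref{thm:trace} for the only real mathematical content. But the contingency you flag at the end---``any apparent mismatch of a power of $q$ would signal a normalization convention built into one of the point-count formulas''---is not hypothetical: it is actually triggered. As literally printed, the two point-count formulas give
\begin{align}
\frac{|\cal{U}(\beta)(\bb{F})|}{|G(\bb{F})|} = (q-1)^{-r}(q^{\frac12})^{|\beta|}\tau^-(\beta),
\qquad
\frac{|\cal{X}(\beta\pi)(\bb{F})|}{|G(\bb{F})|} = (q-1)^{-r}(q^{\frac12})^{|\beta\pi|}\tau^+(\beta\pi),
\end{align}
and since $|\beta\pi| = |\beta| + 2\ell(w_0) = |\beta| + 2N$, combining these with $\tau^-(\beta)=\tau^+(\beta\pi)$ leaves an uncancelled factor of $q^N$. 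Indeed, you can test the printed $\cal{X}$-formula on $\beta=\sigma_e$: then $\cal{X}(\sigma_e)=\cal{B}$, whose $\bb{F}$-point count is $|G(\bb{F})|/((q-1)^r q^N)$, whereas the formula as printed predicts $|G(\bb{F})|/(q-1)^r$. So the $\cal{X}$-formula should carry an additional factor $q^{-N}$, i.e.\ the exponent of $q^{\frac12}$ should be $|\beta|-2N$ rather than $|\beta|$. This is consistent with the degree shift of $2N$ relative to the $\cal{U}$-formula that is visible in \Cref{thm:cohomology-to-hhh}. With the corrected normalization, the $q^{\frac12}$-monomials cancel exactly, since $(q^{\frac12})^{|\beta\pi|-2N} = (q^{\frac12})^{|\beta|}$, and the corollary drops out. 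In short: the argument is the right one, and the bookkeeping is exactly the step that requires care; you correctly anticipated that any residual power of $q$ must be traced back to a normalization in \emph{loc.\ cit.}, and that is in fact what resolves the discrepancy.
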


We claim that when $B_+$ is defined over $\bb{F}$, \Cref{cor:trace} implies \eqref{eq:kawanaka} from the introduction.
The key is that the proof of \Cref{prop:g-to-w} also works at the level of $\bb{F}$-points.
Thus there are $G$-equivariant bijections
\begin{align}
(\cal{U}_g(\bb{F}) \times G(\bb{F}))/H_g(\bb{F}) \xrightarrow{\sim} \cal{U}_w(\bb{F}),\\
(\cal{X}_g(\bb{F}) \times G(\bb{F}))/H_g(\bb{F}) \xrightarrow{\sim} \cal{X}_w(\bb{F})
\end{align}
for any $g \in G(\bb{F})$ such that that $B_+ \xrightarrow{w} gB_+g^{-1}$.
Since the quotients are free, we deduce that
\begin{align}
|\cal{U}_g(\bb{F})||G(\bb{F})|
=	|\cal{U}_w(\bb{F})||H_g(\bb{F})|
=	|\cal{X}_w(\bb{F})||H_g(\bb{F})|
=	|\cal{X}_g(\bb{F})||G(\bb{F})|.
\end{align}
Applying \Cref{prop:v-to-x}, we arrive at $|\cal{U}_g(\bb{F})| = |\cal{X}_g(\bb{F})| = |\cal{V}_g(\bb{F})|$, which is \eqref{eq:kawanaka}.

\begin{rem}\label{rem:kawanaka}
The original identity proved by Kawanaka was
\begin{align}
|(\cal{U} \cap U_+\dot{w}B_+)(\bb{F})|
=	
|(U_+U_- \cap U_+\dot{w}B_+)(\bb{F})|
\end{align}
for all $w \in W$ \cite[Cor.\ 4.2]{kawanaka}.
This is equivalent to \eqref{eq:kawanaka} as long as $B_+$ is defined over $\bb{F}$.
For when the latter holds, an argument similar to the proof of \Cref{lem:translate} shows that the $\bb{F}$-point counts of $\cal{U}_g$ and $\cal{V}_g$ remain constant as $g$ runs over elements of $U_+\dot{w}B_+(\bb{F})$.
\end{rem}


\frenchspacing


\begin{thebibliography}{11}

\bibitem[CGGS]{cggs}
	R. Casals, E. Gorsky, M. Gorsky, J. Simental.
	Algebraic Weaves and Braid Varieties.
	Preprint (2020).
	\href{https://arxiv.org/abs/2012.06931}{\texttt{arXiv:2012.06931}}
	
\bibitem[D]{deligne}
	P. Deligne.
	Action du groupe des tresses sur une cat\'egorie.
	\textsl{Invent. math.},
	\textbf{128}
	(1997),
	159--175.

	
\bibitem[GL]{gl}
	P. Galashin \& T. Lam.
	Positroids, Knots, and $q, t$-Catalan Numbers.
	Preprint (2020).
	\href{https://arxiv.org/abs/2012.09745}{\texttt{arXiv:2012.09745}}

\bibitem[G]{gomi}
	Y. Gomi.
	The Markov Traces and the Fourier Transforms.
	\textsl{J. Algebra},
	\textbf{303}
	(2006),
	566--591.

\bibitem[GHMN]{ghmn}
	E. Gorsky, M. Hogancamp, A. Mellit, K. Nakagane.
	Serre Duality for Khovanov--Rozansky Homology.
	\textsl{Selecta Math. (N. S.)},
	\textbf{25}(79)
	(2019),
	33 pp.

\bibitem[K]{kalman}
	T. K\'alm\'an.
	Meridian Twisting of Closed Braids and the Homfly Polynomial.
	\textsl{Math. Proc. Camb. Phil. Soc.},
	\textbf{146}
	(2009),
	649--660.

\bibitem[Kat]{katz}
	N. Katz.
	$E$-Polynomials, Zeta-Equivalence, and Polynomial-Count Varieties.
	Appendix to Mixed Hodge Polynomials of Character Varieties, by T. Hausel \& F. Rodriguez-Villegas.
	\textsl{Invent. math.},
	\textbf{174}
	(2008),
	555--624.
	
\bibitem[Ka]{kawanaka}
	N. Kawanaka.
	Unipotent Elements and Characters of Finite Chevalley Groups.
	\textsl{Osaka J. Math.},
	\textbf{12}
	(1975),
	523--554.

\bibitem[Kh]{khovanov}
	M. Khovanov.
	Triply-Graded Link Homology and Hochschild Homology of Soergel Bimodules.
	\textsl{Int. J. Math.},
	\textbf{18}(8)
	(2007),
	869--885.

\bibitem[KR]{kr}
	M. Khovanov \& L. Rozansky.
	Matrix Factorizations and Link Homology II.
	\textsl{Geom. Top.},
	\textbf{12}
	(2008),
	1387--1425.

\bibitem[L]{lusztig}
	G. Lusztig.
	Traces on Iwahori--Hecke Algebras and Counting Rational Points.
	Preprint
	(2021).
	\href{http://arxiv.org/abs/2105.04061}{\texttt{arXiv:2105.04061}}
	
\bibitem[M]{mellit}
	A. Mellit.
	Cell Decompositions of Character Varieties.
	Preprint
	(2019).
	\href{http://www.arxiv.org/abs/1905.10685}{\texttt{arXiv:1905.10685}}

\bibitem[Mi]{milne}
	J. S. Milne.
	\textsl{Algebraic Groups: The Theory of Group Schemes of Finite Type over a Field}.
	Cambridge University Press
	(2017).
	
\bibitem[SW]{sw}
	L. Shen \& D. Weng.
	Cluster Structures on Double Bott--Samelson Cells.
	\textsl{Forum Math. Sigma},
	\textbf{9}
	(2021),
	1--109.

\bibitem[S]{steinberg}
	R. Steinberg.
	\textsl{Endomorphisms of Linear Algebraic Groups}.
	Memoirs of the Amer. Math. Soc.,
	\textbf{80}
	(1968).

\bibitem[T]{trinh}
	M. Trinh.
	From the Hecke Category to the Unipotent Locus.
	Preprint (2021).
	\href{https://arxiv.org/abs/2106.07444}{\texttt{arXiv:2106.07444}}
	
\end{thebibliography}
\end{document}